\documentclass[11pt,a4paper]{article}
\usepackage{epsf,epsfig,amssymb,amsfonts,amsgen,amsmath,amstext,amsbsy,amsopn,amsthm,lineno}
\usepackage{color,comment}
\usepackage{graphicx,tikz}
\usepackage{hyperref}
\usepackage{multirow}
\usepackage{cases}

\setlength{\textwidth}{160mm}
\setlength{\oddsidemargin}{0mm}
\setlength{\evensidemargin}{0mm}
\setlength{\topmargin}{-8mm}
\setlength{\textheight}{230mm}

\newcommand\A{\mathrm{A}}   \newcommand\Aut{\mathrm{Aut}}
 \newcommand\bbF{\mathbb{F}} 
\newcommand\C{\mathrm{C}}     \newcommand\Cay{\mathrm{Cay}}

   \newcommand\GL{\mathrm{GL}}  \newcommand\GU{\mathrm{GU}}

 \newcommand\magma{{\sc Magma} }

    \newcommand\PGU{\mathrm{PGU}}   \newcommand\PSL{\mathrm{PSL}}     \newcommand\PSU{\mathrm{PSU}}
\newcommand\Q{\mathrm{Q}}
   
\newcommand\SL{\mathrm{SL}} \newcommand\SO{\mathrm{SO}}    \newcommand\SU{\mathrm{SU}}  \newcommand\Sy{\mathrm{S}}  
\newcommand\T{\mathsf{T}}
\newcommand\Z{\mathbf{Z}} 

\newtheorem{theorem}{Theorem}[section]
\newtheorem{lemma}[theorem]{Lemma}

\newtheorem{conjecture}[theorem]{Conjecture}
\theoremstyle{definition}

\newtheorem{construction}[theorem]{Construction}

\baselineskip 15pt

\usepackage{mathdots}

\numberwithin{equation}{section}
\allowdisplaybreaks

\title{\textbf{Cubic Graphical Regular Representations of $\mathrm{PSU}_3(q)$}}
\author{Jing Jian Li\footnote{College of Mathematics and Information Science, Guangxi University, Nanning  530004, P. R. China (\texttt{lijjhx@gxu.edu.cn, zhangxq@st.gxu.edu.cn})}, ~Binzhou Xia\footnote{School of Mathematics and Statistics, The University of Melbourne, Parkville, VIC 3010, Australia (\texttt{binzhoux@unimelb.edu.au, zhesz@student.unimelb.edu.au})}, ~Xiao Qian Zhang\footnotemark[1], ~Shasha Zheng\footnotemark[2]}

\date{}

\begin{document}

\maketitle
\openup 0.5\jot

%
%
%
%
%

\begin{abstract}
A graphical regular representation (GRR) of a group $G$ is a Cayley graph of $G$ whose full automorphism group is equal to the right regular permutation representation of $G$. Towards a proof of the conjecture that only finitely many finite simple groups have no cubic GRR, this paper shows that $\mathrm{PSU}_3(q)$ has a cubic GRR if and only if $q\geq4$. Moreover, a cubic GRR of $\mathrm{PSU}_3(q)$ is constructed for each of these $q$.

\medskip

\noindent {\bf Keywords:} {Cayley graphs; cubic graphs; graphical regular representations; projective special unitary groups}\\

\smallskip

\end{abstract}

\section{Introduction}

Let $G$ be a group whose identity element is denoted as $1$, and let $S$ be a subset of $G$ such that $1\notin S$ and $S^{-1}=S$, where $S^{-1}=\{x^{-1}: x \in S\}$. The \emph{Cayley graph} of $G$ with \emph{connection set} $S$, denoted by $\Cay(G,S)$, is defined as the graph with vertex set $G$ such that $x$ and $y$ are adjacent if and only if $yx^{-1}\in S$. If one identifies $G$ with its right regular representation, then $G$ is a subgroup of the full automorphism group $\Aut(\Cay(G,S))$ of $\Cay(G,S)$. We call $\Cay(G,S)$ a \emph{graphical regular representation} (\emph{GRR} for short) of $G$ if $\Aut(\Cay(G,S))=G$.

The question which finite groups admit GRRs was studied in a series of papers, and eventually a complete characterization was obtained by Godsil in \cite{Godsil1981}. There is also special interest in studying which finite groups admit GRRs of a prescribed valency. In the case of valency three, Fang, Li, Wang and Xu \cite{FLWX2002} conjectured that every finite non-abelian simple group admits a cubic GRR. However, in \cite{XF2016}, Xia and Fang found that $\PSL_2(7)$ is a counterexample to this conjecture. Meanwhile, they proposed the following conjecture in the same paper.

\begin{conjecture}(\cite[Conjecture 4.3]{XF2016})\label{conj1}
Except a finite number of cases, every finite non-abelian simple group has a cubic GRR.
\end{conjecture}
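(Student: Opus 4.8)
The plan is to attack the conjecture through the Classification of Finite Simple Groups, which partitions the finite non-abelian simple groups into the alternating groups $\A_n$ ($n\ge5$), sixteen infinite families of groups of Lie type (six classical: the linear, symplectic, unitary and the three orthogonal series; and ten exceptional and twisted series), and the $26$ sporadic groups. The sporadic groups contribute only finitely many cases and may be absorbed into the allowed exceptional set, so the whole problem reduces to proving, \emph{for each infinite family}, that all but finitely many of its members admit a cubic GRR. Recall that a cubic connection set $S$ has one of two shapes: three involutions $\{a,b,c\}$, or an inverse-closed pair together with an involution $S=\{s,s^{-1},t\}$ with $s^2\ne1=t^2$. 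I would work mainly with the second shape, since the asymmetry between $s$ and $t$ gives more leverage in pinning down the automorphisms that fix $S$.

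For any such $S$ with $\langle S\rangle=G$, write $A=\Aut(\Cay(G,S))$ and let $A_1$ be the stabiliser of the vertex $1$. Regularity of $G$ gives $A=GA_1$ and $G\cap A_1=1$, so $\Cay(G,S)$ is a GRR precisely when $A_1=1$. I would split this into two tasks: the \emph{rigidity} condition $\Aut(G,S)=1$ (no nonidentity automorphism of $G$ fixes $S$ setwise), and the \emph{normality} condition $G\trianglelefteq A$; together these force $A_1\le\Aut(G,S)=1$. Thus for each family one must (i) exhibit an explicit $S=\{s,s^{-1},t\}$ generating $G$, (ii) verify rigidity, and (iii) establish normality. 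The determination carried out below for $\PSU_3(q)$ is exactly such a package for one slice (the unitary series in dimension $3$, all $q\ge4$), and serves as the template for the general program.

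For the rigidity step I would choose $s$ and $t$ to lie in conjugacy classes separated by \emph{numerical invariants} preserved by $\Aut(G)$ -- element orders, fixed-point dimensions on the natural module, or the $\bbF$-structure of eigenvalues -- so that every $\sigma\in\Aut(G,S)$ must fix $t$ and $s$ individually, the only competing possibility $s\mapsto s^{-1}$ being blocked by class considerations and by taking $t$ not to invert $s$. Standard generation theory for simple groups (generation by an involution and a second element, and $(2,3)$- or $(2,p)$-generation results) supplies both the generation $\langle s,t\rangle=G$ and enough control of the classes; the stabiliser in $\Aut(G)$ of a generating pair is trivial once the pair is rigid, yielding $\Aut(G,S)=1$. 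Making these choices \emph{uniformly} in the rank and field parameters is the lengthy but essentially routine bulk of the work, handled type by type.

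The crux, and the step I expect to be the main obstacle, is the normality condition $G\trianglelefteq A$. Since $\Gamma=\Cay(G,S)$ is cubic, $A_1$ acts on the three neighbours of $1$ as a subgroup of $\Sym_3$, and the theory of cubic vertex-transitive graphs forces $A_1$ to be a $\{2,3\}$-group of bounded order, whence $|A:G|$ is bounded. If $G$ were not normal in $A$, then $G$ would be a \emph{regular} non-abelian simple subgroup of a small overgroup $A$ of a cubic graph, and I would rule this out by combining the possible global structure of $A$ for connected cubic vertex- and arc-transitive graphs with factorisation and maximal-subgroup data for $G$ (via the O'Nan--Scott and quasiprimitive analysis of $A$ together with maximal-factorisation results), which severely restrict a regular simple subgroup. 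The residual configurations would be eliminated using combinatorial invariants of the specific graph -- its girth and the local cycle structure around $\{1,s,t\}$ -- engineered through the choice of $S$ to be incompatible with any extra symmetry. The hard part will be carrying this normality analysis through \emph{uniformly} across all six classical families (both as the rank grows with bounded field and as the field grows with bounded rank) and across the exceptional families; the single-family results such as the one established below for $\PSU_3(q)$ are the indispensable base cases and models on which the general argument must be patterned.
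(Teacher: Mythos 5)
Your target here is stated in the paper only as a conjecture: the paper does not prove it, and its actual contribution is the verification of the single family $\PSU_3(q)$ (Theorem~\ref{thm1}), with the general case explicitly deferred to work in preparation \cite{Xia2021}. What you have written is accordingly a research program, not a proof: the parts you call ``essentially routine'' (uniform constructions and rigidity for every classical and exceptional family, as rank and field vary) are precisely the unproven content, and the step you yourself identify as the crux --- normality of $G$ in $\Aut(\Cay(G,S))$ --- is left unresolved, so there is no complete argument to check against anything.

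Beyond incompleteness, two concrete steps of your plan fail or are misplaced. First, your preferred connection-set shape $S=\{s,s^{-1},t\}$ with $o(s)>2=o(t)$ is provably unusable for infinitely many simple groups: by \cite[Theorem 4, Corollary 6]{Breda2021}, quoted in this paper's introduction, for $G=\PSL_3(q)$ or $\PSU_3(q)$ and \emph{any} generating pair $(x,y)$ with $y$ an involution, $\Aut(G,\{x,x^{-1},y\})$ is nontrivial; hence every cubic GRR of these groups must have a connection set of three involutions --- this is exactly why Spiga's conjecture \cite[Conjecture 1.3]{Spiga2018} had to be amended --- and your asymmetry-based rigidity argument cannot even begin for these two families. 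The paper's constructions (Constructions~\ref{cons1} and~\ref{cons2}) therefore use three involutions, with rigidity forced not by class-separating invariants on $s$ versus $t$ but by arithmetic conditions (\eqref{oc1}--\eqref{oc3}, \eqref{ec1}--\eqref{ec4}, Lemmas~\ref{exi1} and~\ref{exi2}) making the characteristic polynomials of the pairwise products $xy$, $xz$, $yz$ inequivalent under all field automorphisms. Second, the normality condition you expect to be the main obstacle is not open at valency three: the paper invokes \cite[Theorem~1.3]{FLWX2002} (in Theorems~\ref{thm2} and~\ref{thm3}) to reduce the GRR property for a connection set of size three to exactly $\langle S\rangle=G$ and $\Aut(G,S)=1$, so the genuine difficulty lives entirely in the family-by-family engineering of $S$ (including the maximal-subgroup analysis of Lemmas~\ref{lem2} and~\ref{lem7} needed for generation), which your proposal does not carry out.
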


In \cite[Conjecture 1.3]{Spiga2018}, Spiga conjectured that except $\PSL_2(q)$ and a finite number of other cases, every finite non-abelian simple group $G$ contains an element $x$ and an involution $y$ such that $\Cay(G,\{x,x^{-1},y\})$ is a GRR of $G$. However, this conjecture is not true since both $\PSL_3(q)$ and $\PSU_3(q)$ form infinite families of counterexamples. For a Cayley graph $\Cay(G,S)$, let
\[
\Aut(G,S)=\{\alpha\in \Aut(G): S^{\alpha}=S\}
\]
be the group of automorphisms of $G$ stabilizing $S$ setwise. It is easy to see that if $\Cay(G,S)$ is  a GRR of $G$, then we necessarily have $\Aut(G,S)=1$ and $G=\langle S\rangle$. By \cite[Theorem 4, Corollary 6]{Breda2021}, for $G=\PSL_3(q)$ or $\PSU_3(q)$ and any pair of generators $(x,y)$ of $G$ where $y$ is an involution, $\Aut(G,\{x,x^{-1},y\})$ is always nontrivial and therefore the connection set of any cubic GRR of $G$ (if it exists) consists of three involutions. In \cite[Theorem 1.5]{Xia2021}, Xia, Zheng and Zhou showed that Spiga's conjecture (\cite[Conjecture 1.3]{Spiga2018}) can be saved by adding $\PSL_3(q)$ and $\PSU_3(q)$ to the list of exceptional groups.

Note that, by \cite[Theorem 1.3]{XF2016} and \cite[Theorem 1.2]{X2020}, $\PSL_2(q)$ with $q\notin\{2,3,7\}$ and $\PSL_3(q)$ with $q\neq2$ admit cubic GRRs.
Thus, to settle Conjecture \ref{conj1}, the only remaining family of groups that need to be considered is $\PSU_3(q)$.
It is known by \cite{Coxeter1981} that the non-simple group $\PSU_3(2)$ does not admit cubic GRRs, and it is clear that $\PSU_3(3)$ has no cubic GRR since the group cannot be generated by any triple of involutions. Inspired by the work on cubic GRRs of $\PSL_3(q)$ in \cite{X2020}, in this paper, we construct cubic GRRs of $\PSU_3(q)$ for each $q\geq4$ and verify Conjecture \ref{conj1} for $\PSU_3(q)$, which eventually helps to confirm Conjecture \ref{conj1} in \cite{Xia2021}.

\begin{theorem}\label{thm1}
For a prime power $q$, the group $\PSU_3(q)$ has a cubic GRR if and only if $q\geq4$.
\end{theorem}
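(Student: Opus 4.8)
The plan is to prove Theorem~\ref{thm1} by establishing the two directions separately. For the ``only if'' direction, the cases $q\in\{2,3\}$ are already disposed of in the excerpt: $\PSU_3(2)$ admits no cubic GRR by \cite{Coxeter1981}, and $\PSU_3(3)$ cannot be generated by three involutions, so it has no cubic GRR either. Since every cubic GRR of $\PSU_3(q)$ must have a connection set consisting of three involutions (by the discussion following \cite[Theorem~4, Corollary~6]{Breda2021}), these two small cases are genuine obstructions and no further work is needed for the forward implication. Thus the substance of the theorem is the ``if'' direction: for each prime power $q\geq4$, one must \emph{construct} a connection set $S=\{a,b,c\}$ of three involutions in $G=\PSU_3(q)$ such that $\Cay(G,S)$ is a GRR.

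For the construction, I would follow the strategy used for $\PSL_3(q)$ in \cite{X2020}, adapted to the unitary setting. The first step is to write down explicit involutions in $\PSU_3(q)$ using the natural $3$-dimensional unitary representation over $\bbF_{q^2}$, likely choosing $a,b,c$ to be (images of) suitable reflection-type or diagonalizable involutions whose entries depend on $q$ in a uniform way, possibly split into congruence cases according to the parity of $q$ or the residue of $q$ modulo small primes. The second step is to verify that $\langle a,b,c\rangle=G$; this is a generation statement that one can attack by showing the subgroup is not contained in any maximal subgroup of $\PSU_3(q)$, invoking Aschbacher-type or the known classification of maximal subgroups of $\PSU_3(q)$ to rule out the bounded list of possibilities (reducible subgroups, imprimitive subgroups, subfield subgroups, and the finitely many exceptional almost-simple candidates). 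The third and decisive step is to prove that $\Aut(\Cay(G,S))=G$, equivalently that the graph admits no automorphism beyond the regular action of $G$.

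To carry out this last step, the standard route is to consider the stabilizer $A_1$ of the vertex corresponding to the identity in $A=\Aut(\Cay(G,S))$ and show it is trivial, since $|A|=|G|\cdot|A_1|$ and $G\leq A$ already acts regularly. One shows $A_1$ fixes $S$ pointwise by a local analysis: examine the induced action on the neighbourhood $S$ and on short cycles or common-neighbour configurations among $a,b,c$, so that an automorphism fixing $1$ is forced to fix each of $a,b,c$ and then, propagating outward, to fix everything. A convenient algebraic reduction is the fact (noted in the excerpt) that $\Aut(G,S)=1$ is necessary; one would verify this directly by checking that no nontrivial automorphism of $G$ can permute the specific triple $\{a,b,c\}$, using the known structure of $\Aut(\PSU_3(q))=\PGU_3(q)\rtimes\langle\phi\rangle$ with $\phi$ the field automorphism, together with the distinctness of the chosen involutions (for instance by comparing centralizer orders or traces). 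The genuinely hard part is ruling out ``hidden'' graph automorphisms that do not come from group automorphisms, i.e.\ proving that the local combinatorial rigidity of $\Cay(G,S)$ forces $A_1=1$; this typically requires a careful and somewhat delicate argument about the cycle structure of the graph near the identity, and it is the step most sensitive to the precise choice of the three involutions. The overall argument must also handle the finitely many smallest admissible values of $q$ (those too small for the generic generation and maximal-subgroup arguments to apply) by a separate computational check, presumably in \magma.
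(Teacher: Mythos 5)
Your overall architecture matches the paper's quite closely: the ``only if'' direction is handled exactly as you describe ($\PSU_3(2)$ via \cite{Coxeter1981}, $\PSU_3(3)$ because it is not generated by any triple of involutions), and for $q\geq4$ the paper likewise constructs three explicit involutions (split by the parity of the characteristic, in Constructions~\ref{cons1} and~\ref{cons2}), proves generation by showing $\langle X,Y,Z\rangle$ lies in no maximal subgroup of $\SU_3(q)$ using \cite[Tables~8.5--8.6]{BHR2013}, and proves $\Aut(\PSU_3(q),S)=1$ by exploiting the structure $\Aut(\PSU_3(q))/\PGU_3(q)=\langle\phi\PGU_3(q)\rangle$ and comparing characteristic polynomials (equivalently, traces) of the products $xy$, $xz$, $yz$ --- precisely the kind of argument you sketch.

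There is, however, one genuine gap, and it sits exactly at the point you yourself call ``the genuinely hard part.'' You propose to show that the vertex stabilizer $A_1$ is trivial by a direct local analysis of cycles and common-neighbour configurations near the identity vertex. For an infinite family of groups this is not a workable plan as stated, and the paper does not attempt it: instead it invokes \cite[Theorem~1.3]{FLWX2002}, which asserts that for a connection set $S$ of size three, $\Cay(\PSU_3(q),S)$ is a GRR of $\PSU_3(q)$ if and only if $\langle S\rangle=\PSU_3(q)$ and $\Aut(\PSU_3(q),S)=1$. This known reduction is the linchpin of the whole approach; it converts the graph-theoretic rigidity problem (ruling out hidden automorphisms) into the two purely group-theoretic verifications, which is why Sections~2 and~3 of the paper consist only of a generation lemma (Lemmas~\ref{lem4} and~\ref{lem8}) and an $\Aut(G,S)=1$ lemma (Lemmas~\ref{lem5} and~\ref{lem10}). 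Without this citation, your step three is a missing proof of a deep result, not a routine verification. A secondary point: in the paper the condition $\Aut(G,S)=1$ is not checked after the fact for an arbitrary choice of involutions; rather, existence lemmas (Lemmas~\ref{exi1} and~\ref{exi2}) must first produce field elements $a,b$ satisfying explicit inequalities such as \eqref{oc1}--\eqref{oc3} and \eqref{ec1}--\eqref{ec4}, precisely so that the characteristic polynomials of the three pairwise products remain pairwise distinct under every relevant power of the field automorphism $\phi$; the construction is engineered around these inequalities, and this arithmetic engineering is absent from your outline.
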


The rest of this paper is devoted to the proof of Theorem \ref{thm1}, which is divided into two cases where the characteristic is odd (in Section 2) or even (in Section 3). For $q\geq5$ odd, a graph is constructed in Construction~\ref{cons1} and proved to be a GRR in Theorem~\ref{thm2}; for $q\geq4$ even, a graph is constructed in Construction~\ref{cons2} and proved to be a GRR in Theorem~\ref{thm3}.

\section{Odd characteristic}

Let $q=p^f\geq5$ with odd prime $p$. Denote the projection from $\SU_3(q)$ to $\PSU_3(q)$ by $\eta$.

\begin{lemma}\label{exi1}
Let $q=p^f\geq5$ with odd prime $p$ and $I=\left\{0, f/\gcd(3,f), 2f\gcd(3,f)\right\}$. For an element $b$ in $\bbF_{q^2}^\times$ satisfying $b+b^q=1$ and $b\neq b^q$, there exists an element $a$ of order $q-1$ in $\bbF_{q^2}^\times$ such that
\begin{equation}\label{oc1}
(a+a^{-1}+1)^{p^i}\neq a+a^{-1}b^{q+1}\text{ for any } i\in I,
\end{equation}
\begin{equation}\label{oc2}
(a+a^{-1}+1)^{p^i}\neq 1+b^{q+1}\text{ for any } i\in I
\end{equation}
and
\begin{equation}\label{oc3}
a+a^{-1}b^{q+1}\neq 1+b^{q+1}.
\end{equation}
\end{lemma}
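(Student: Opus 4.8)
The plan is to treat \eqref{oc1}--\eqref{oc3} as a system of polynomial constraints on $a$ and to show, by a counting argument, that a generator of the order-$(q-1)$ subgroup of $\bbF_{q^2}^\times$ can be chosen avoiding all the forbidden values. First I would record two reductions. An element of order $q-1$ in the cyclic group $\bbF_{q^2}^\times$ is exactly a generator of its unique subgroup of order $q-1$, namely $\bbF_q^\times$; hence $a$ ranges over the $\phi(q-1)$ primitive elements of $\bbF_q$, and in particular $a\in\bbF_q$ and $a^{p^i}\in\bbF_q$ for every $i$. Writing $c=b^{q+1}$, the relations $b+b^q=1$ and $b\neq b^q$ force $c\in\bbF_q$ (indeed $b$ and $b^q$ are the roots of $t^2-t+c$), so the right-hand sides of the three displayed inequalities lie in $\bbF_q$.

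Next I would turn each condition into a polynomial equation. By the Frobenius identity $(a+a^{-1}+1)^{p^i}=a^{p^i}+a^{-p^i}+1$, the failure of \eqref{oc1} for a given $i$ reads $a^{p^i}+a^{-p^i}+1=a+a^{-1}c$; multiplying through by $a^{p^i}$ gives a polynomial equation in $a$ of degree at most $2p^i$. Likewise the failure of \eqref{oc2} at $i$ becomes $a^{p^i}+a^{-p^i}=c$, again of degree at most $2p^i$, and the failure of \eqref{oc3} is $a+a^{-1}c=1+c$, i.e.\ $(a-1)(a-c)=0$, of degree $2$. Thus each of the finitely many forbidden equations has at most $2\max_{i\in I}p^{i}$ solutions in $\bbF_q$, and the set of admissible $a$ has size at least $\phi(q-1)$ minus the total number of such solutions.

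Here I would split on whether $3\mid f$. When $3\nmid f$ one has $i\equiv0\pmod f$ for every $i\in I$, so $a^{p^i}=a$ collapses all three indices to the single equation at $i=0$; the total number of forbidden values of $a$ is then at most $5$ (one from \eqref{oc1}, two from \eqref{oc2}, two from \eqref{oc3}), and any $q$ with $\phi(q-1)>5$ immediately yields a valid $a$. When $3\mid f$ the three exponents $0$, $f/\gcd(3,f)$, $2f/\gcd(3,f)$ (that is, $0,f/3,2f/3$) are distinct modulo $f$, the dominant degree is $2p^{2f/3}=2q^{2/3}$, and summing over the six twisted equations together with \eqref{oc3} bounds the number of forbidden values by $4q^{2/3}+O(q^{1/3})$. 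In either regime the count of forbidden values is $O(q^{2/3})$.

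Finally I would compare the two quantities. Using the standard lower bound $\phi(n)\gg n/\log\log n$, the inequality $\phi(q-1)>4q^{2/3}+O(q^{1/3})$ holds for all $q$ beyond an explicit, moderate threshold, which settles all but finitely many $q$; the finitely many remaining prime powers (those below the threshold when $3\mid f$, and those with $\phi(q-1)\le5$ when $3\nmid f$, namely $q\in\{5,7,9,11,13\}$) are disposed of directly by searching for a primitive $a$, e.g.\ with \magma. The main obstacle is precisely this small-$q$ regime: the totient lower bound is far too weak near the crossover (for instance $\phi(26)=12$ against $4\cdot27^{2/3}=36$ at $q=27$), so the clean counting argument must be supplemented by a finite verification. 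One must also be careful to track the collapse of the Frobenius twists when $3\nmid f$, so as not to inflate the degree bound to the useless value $2q^2$ coming from the nominal exponent $2f$.
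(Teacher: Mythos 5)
Your proposal is correct and follows the same basic strategy as the paper: identify the elements of order $q-1$ with the generators of $\bbF_q^\times$, turn each failure of \eqref{oc1}--\eqref{oc3} into a polynomial equation whose degree bounds the number of bad $a$, split on $\gcd(3,f)$ (noting, as the paper does implicitly, that the twists $i=f,2f$ collapse to $i=0$ when $3\nmid f$), and finish by comparing against $\varphi(q-1)$ with a computer check for small $q$. The one substantive difference is at the twist $i=2f/3$: you accept the degree bound $2p^{2f/3}=2q^{2/3}$, whereas the paper first applies the Frobenius $x\mapsto x^{q^{1/3}}$ to both sides of the offending equation (legitimate on the relevant set because $a$, $a^{-1}$ and $b^{q+1}$ all lie in $\bbF_q$, so the left side returns to $a+a^{-1}+1$), converting it into a polynomial of degree only $2q^{1/3}$. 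This gives the paper a total of at most $5+8q^{1/3}$ forbidden values, which is already beaten by the weak, clean bound $\varphi(q-1)>(q-1)^{2/3}$ that it cites from Pellegrini--Tamburini, leaving only $q\in\{27,125,343\}$ for \magma. With your bound $4q^{2/3}+4q^{1/3}+5$ that cited inequality is useless (it is always smaller than $4q^{2/3}$), so, as you yourself note, you must invoke a genuinely stronger effective totient bound of Rosser--Schoenfeld type $\varphi(n)\gg n/\log\log n$, and your finite verification list grows accordingly: for instance $\varphi(728)=288<365$, $\varphi(1330)=432<533$ and $\varphi(2196)=720<733$, so at least $q\in\{27,125,343,729,1331,2197\}$ must be checked by machine, plus any prime powers below the explicit Rosser--Schoenfeld threshold. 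To make your write-up rigorous you would need to pin down that threshold and enumerate the exceptions precisely, but there is no gap in principle. In short, both routes work; the paper's untwisting trick buys a much smaller computation and purely elementary analytic input, while your version trades that single algebraic observation for a heavier totient estimate and more case checking.
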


\begin{proof}
Let $S$ be the set of elements $a$ of order $q-1$ in $\bbF_{q^2}^\times$ satisfying the conditions \eqref{oc1}--\eqref{oc3}, let $S_0$ be the set of elements of order $q-1$ in $\bbF_{q^2}^\times$, let
\[
S_{1,i}=\{a\in S_0: (a+a^{-1}+1)^{p^i}=a+a^{-1}b^{q+1}\},
\]
\[
S_{2,i}=\{a\in S_0: (a+a^{-1}+1)^{p^i}=1+b^{q+1}\}
\]
for each $i\in I$, and let
\[
S_3=\{a\in S_0: a+a^{-1}b^{q+1}=1+b^{q+1}\}.
\]
It is clear that
\[|S_{1,0}|+|S_{2,0}|+|S_{3}|\leq 5\]
and that $|S_0|\leq \varphi(q-1)$, where $\varphi$ is Euler's totient function.

First assume $\gcd(3,f)=1$. For $q\in\{5, 7, 9, 11, 13\}$, the conclusion $|S|>0$ can be directly verified by computation in \magma \cite{magma}. For $q>13$, we have
\[
|S|\geq|S_0|-(|S_{1,0}|+|S_{2,0}|+|S_{3}|)\geq \varphi(q-1)-5> 0.
\]

Next assume $\gcd(3,f)=3$. By \cite[Lemma 2.1]{PT2015}, we know $\varphi(q-1)>(q-1)^{2/3}$. Note that
\begin{align*}
S_{1,f/3}&\subseteq \{a\in \bbF_{q^2}^\times: (a+a^{-1}+1)^{q^{1/3}}=a+a^{-1}b^{q+1}\}\\
&=\{a\in \bbF_{q^2}^\times: a^{2q^{1/3}}+1+a^{q^{1/3}}=a^{q^{1/3}+1}+a^{q^{1/3}-1}b^{q+1}\}
\end{align*}
and that
\begin{align*}
S_{1,2f/3}&\subseteq\{a\in \bbF_{q^2}^\times: (a+a^{-1}+1)^{q^{2/3}}=a+a^{-1}b^{q+1}\}\\
&=\{a\in \bbF_{q^2}^\times: a+a^{-1}+1=(a+a^{-1}b^{q+1})^{q^{1/3}}\}\\
&=\{a\in \bbF_{q^2}^\times: a^{q^{1/3}+1}+a^{q^{1/3}-1}+a^{q^{1/3}}=a^{2q^{1/3}}+b^{q+1}\}.
\end{align*}
It follows that
\[|S_{1,f/3}|+|S_{1,2f/3}|\leq 2q^{1/3}+2q^{1/3}=4q^{1/3}.\]
Similarly, we obtain
\[|S_{2,f/3}|+|S_{2,2f/3}|\leq 4q^{1/3}.\]
For $q\in\{3^3, 5^3, 7^3\}$, the conclusion $|S|>0$ can be directly verified by computation in \magma \cite{magma}. For $q>7^3$, we have
\[
|S|>(q-1)^{2/3}-5-8q^{1/3}>0.
\]

Hence there exists $a\in S$, which completes the proof.
\end{proof}

\begin{construction}\label{cons1}
Let $q=p^f\geq5$ with odd prime $p$. Let $b$ be an element in $\bbF_{q^2}^\times$ such that $b+b^q=1$ and $b\neq b^q$, and let $a$ be an element of order $q-1$ in $\bbF_{q^2}^\times$ satisfying the conditions \eqref{oc1}--\eqref{oc3} in Lemma \ref{exi1}.
Moreover, let
\[
X=
\begin{pmatrix}
-b^q&b&b^{q+1}\\
1&0&b^q\\
1&1&-b
\end{pmatrix},
\quad Y=
\begin{pmatrix}
0&0&a\\
0&-1&0\\
a^{-1}&0&0
\end{pmatrix},
\quad Z=
\begin{pmatrix}
0&0&1\\
0&-1&0\\
1&0&0
\end{pmatrix},
\]
$x=X^\eta$, $y=Y^\eta$, $z=Z^\eta$, $S=\{x,y,z\}$ and $\Gamma(q)=\Cay(\PSU_3(q),S)$.
\end{construction}

Note in the above construction that $X,Y,Z$ are indeed elements of
\[
\{A\in\SL(3,q^2): \overline{A}^TWA=W\}=\SU_3(q)
\]
where $\overline{(a_{ij})}=(a_{ij}^q)$ and
\[
W=
\begin{pmatrix}
0&0&1\\
0&1&0\\
1&0&0
\end{pmatrix}.
\]

\begin{lemma}\label{lem1}
In the notation of Construction~$\ref{cons1}$, $o(x)=o(y)=o(z)=2$ and $o(yz)=q-1$ .
\end{lemma}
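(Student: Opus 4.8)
The plan is to carry out everything at the level of the matrix representatives $X,Y,Z\in\SU_3(q)$ and then pass to $\PSU_3(q)$ through $\eta$, recalling that $\ker\eta=\Z(\SU_3(q))$ consists exactly of the scalar matrices $\lambda I$ with $\lambda^{q+1}=\lambda^3=1$. Thus for a matrix $M\in\SU_3(q)$ the order of $M^\eta$ is the least $k\geq1$ for which $M^k$ is such a scalar matrix, and $M^\eta=1$ precisely when $M$ itself is scalar.

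First I would show that $x,y,z$ are involutions. A direct one-line multiplication gives $Y^2=Z^2=I$, and since neither $Y$ nor $Z$ is a scalar matrix, the images $y,z$ are nontrivial, whence $o(y)=o(z)=2$. The only computation carrying any content is $X^2=I$, and this is the single place where the defining property of $b$ enters. Using $b^q=1-b$ (equivalently $b+b^q=1$), together with the derived identities $b^{q+1}=b-b^2$ and $b^{2q}=(1-b)^2$, I would expand $X^2$ entry by entry, reducing every power of $b$ to a polynomial in $b$; each off-diagonal entry then collapses to $0$ and each diagonal entry to $1$, so $X^2=I$. As $X$ is visibly not scalar, this yields $o(x)=2$.

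For the order of $yz$, I would compute the product $YZ$ explicitly and observe that the two anti-diagonal matrices multiply to the diagonal matrix $\diag(a,1,a^{-1})$. Hence $(YZ)^k=\diag(a^k,1,a^{-k})$, which lies in $\ker\eta$ exactly when it is scalar; that forces $a^k=1$, and since $a$ has order $q-1$ in $\bbF_{q^2}^\times$ the least such positive $k$ is $q-1$. Therefore $o(yz)=q-1$.

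The only genuinely nonroutine step is the verification $X^2=I$, which I regard as the main (though still elementary) obstacle: it is where the hypothesis $b+b^q=1$ must be invoked, and some care is needed to reduce the powers $b^{q+1}$, $b^{2q}$, $b^{2q+1}$, $b^{q+2}$ consistently before the cancellations become visible. The statements for $y$, $z$, and $yz$ follow immediately from the explicit shapes of the matrices.
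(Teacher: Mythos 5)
Your proposal is correct and takes essentially the same approach as the paper: verify $X^2=Y^2=Z^2=I$ by direct computation using $b+b^q=1$, compute $YZ=\mathrm{diag}(a,1,a^{-1})$, and pass to $\PSU_3(q)$ via $\eta$. The only cosmetic difference is how you rule out $x,y,z=1$ (you observe the matrices are non-scalar, while the paper notes $|\ker\eta|=\gcd(3,q+1)$ is odd), and your explicit handling of when $(YZ)^k\in\ker\eta$ simply spells out what the paper leaves implicit.
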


\begin{proof}
It is clear that none of $x$, $y$ and $z$ is the identity and it is straightforward to verify that $X^2=Y^2=Z^2=1$. Since $|\ker(\eta)|=|\Z(\SU_3(q))|=\gcd(3,q+1)$ is coprime to $2$, we have $o(x)=o(y)=o(z)=2$. Note that
\[
YZ=
\begin{pmatrix}
a&0&0\\
0&1&0\\
0&0&a^{-1}
\end{pmatrix}.
\]
Hence $o(yz)=q-1$.
\end{proof}

\begin{lemma}\label{lem2}
Let $H$ be a maximal subgroup of $\SU_3(q)$ with $q>13$ odd. If $H^\eta$ contains a dihedral group of order $2(q-1)$, then either $H$ is reducible or $H=\SO_3(q)\times\C_{\gcd(q+1,3)}$.
\end{lemma}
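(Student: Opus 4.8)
The plan is to classify the maximal subgroups $H$ of $\SU_3(q)$ according to Aschbacher's theorem (or more directly the known list of maximal subgroups of $\SU_3(q)$ and $\PSU_3(q)$, e.g.\ from Hartley or the tables in Bray--Holt--Roney-Dougal), and for each class either place $H$ inside the two stated outcomes or rule it out by an order or structural argument. The key hypothesis is that $H^\eta$ contains a dihedral group $\D_{2(q-1)}$; since the dihedral group has a cyclic subgroup of order $q-1$ and is generated together with an involution inverting it, the strategy is to exploit the presence of an element of order $q-1$ (or a quotient thereof, accounting for the centre of order $\gcd(q+1,3)$) and an inverting involution inside $H^\eta$.

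First I would dispose of the reducible case immediately, since "$H$ is reducible'' is one of the two permitted conclusions, and note the subfield and field-extension geometric classes. For the semilinear/field-extension subgroups (those of type $\GU_1(q)\wr\Sy_3$ or the Singer-type normalizers associated to $\GU_3(q^{1/2})$-style extensions, and $3\times\PSL_2(7)$-flavoured classes), and for the subfield subgroups $\SU_3(q_0)$ with $q=q_0^r$, I would compare the order $q-1$ of the cyclic part against the exponent or the largest cyclic subgroup order available in $H^\eta$, using $q>13$ to make the arithmetic inequalities strict. The point is that a dihedral group of order $2(q-1)$ forces $H^\eta$ to be fairly large relative to $q$, eliminating the small almost-simple and subfield candidates.

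The main structural work, and the step I expect to be the principal obstacle, concerns the class of subgroups $H$ that are (up to the central factor) of orthogonal type, i.e.\ normalizers of $\SO_3(q)\cong\PGL_2(q)$ acting on the natural module with a nondegenerate symmetric form. Here I must show that the only irreducible non-subfield maximal subgroup whose image can contain $\D_{2(q-1)}$ is exactly $H=\SO_3(q)\times\C_{\gcd(q+1,3)}$, the second listed outcome. This requires identifying which irreducible classes possess elements of order $q-1$ that are inverted by an involution: in $\PGL_2(q)\cong\SO_3(q)$ the split torus of order $q-1$ together with a reflection supplies precisely such a dihedral subgroup, so this case genuinely occurs and must be retained rather than excluded. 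The delicate part is confirming that no other irreducible class simultaneously survives, which I would handle by pinning down element orders via eigenvalue/Singer-cycle considerations in $\GU_3(q)$ and ruling out the imprimitive and remaining classes on order grounds.

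Finally, I would assemble these case analyses into the dichotomy, being careful that the central factor $\C_{\gcd(q+1,3)}=\Z(\SU_3(q))$ lies in the kernel of $\eta$, so that the dihedral group in $H^\eta$ lifts to a suitable preimage in $H$ without the count of element orders being corrupted; this is why the orthogonal outcome is stated as the direct product with $\C_{\gcd(q+1,3)}$. The bound $q>13$ is used throughout to guarantee that the generic inequalities hold and that the sporadic small-field exceptions (which would otherwise appear in the maximal-subgroup tables) do not interfere, leaving exactly the two claimed alternatives.
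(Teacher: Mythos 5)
Your proposal follows essentially the same route as the paper: invoke the classification of maximal subgroups of $\SU_3(q)$ (the paper uses \cite[Tables~8.5--8.6]{BHR2013}), keep the reducible case and $\SO_3(q)\times\C_{\gcd(q+1,3)}$, and eliminate every other irreducible class by order/element-order arithmetic that becomes strict once $q>13$. One remark: since $\SO_3(q)\times\C_{\gcd(q+1,3)}$ is an allowed conclusion, the ``main structural work'' you anticipate there is logically unnecessary --- the entire content of the lemma is the elimination of the other classes, of which the only delicate one is the subfield subgroup $\SU_3(q_0).\C_{\gcd((q+1)/(q_0+1),3)}$ with $q=q_0^r$ for $r\in\{3,5,7\}$, which the paper handles by computing $|H|$ modulo $q_0^r-1$ and which your element-order comparison (largest cyclic subgroup of $\SU_3(q_0)$ has order $q_0^2-1 < (q_0^r-1)/3$) would dispatch equally well.
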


\begin{proof}
By \cite[Tables~8.5--8.6]{BHR2013}, either $H$ is reducible or $H$ is one of the following groups:
\begin{itemize}
\item[(i)] $\C_{q+1}^2\rtimes\Sy_3$;
\item[(ii)] $\C_{q^2-q+1}\rtimes\C_3$;
\item[(iii)] $\SU_3(q_0).\C_{\gcd((q+1)/(q_0+1),3)}$, where $q=q_0^r$ with $r$ odd prime;
\item[(iv)] $\SO_3(q)\times\C_{\gcd(q+1,3)}$;
\item[(v)] $3_+^{1+2}\rtimes\Q_8.\C_{\gcd(q+1,9)/3}$, where $q=p\equiv2\pmod{3}$;
\item[(vi)] $\PSL_2(7)\times\C_{\gcd(q+1,3)}$, where $q=p\equiv3,5,6\pmod{7}$;
\item[(vii)] $\C_3.\A_6$, where $q=p\equiv11,14\pmod{15}$.
\end{itemize}
Suppose that $H^\eta$ contains a dihedral group of order $2(q-1)$.

If $H$ is as in~(i), then $6(q+1)^2$ is divisible by $2(q-1)$, whence $6((q+1)/2)^2$ is divisible by $(q-1)/2$. Since $(q+1)/2$ is coprime to $(q-1)/2$, we deduce that $6$ is divisible by $(q-1)/2$, which is not possible as $q>13$.

If $H$ is as in~(ii), then $3(q^2-q+1)$ is divisible by $2(q-1)$, which is not possible.

Assume that $H$ is as in~(iii). Note that $|H|=q_0^3(q_0^3+1)(q_0^2-1)\gcd((q_0^r+1)/(q_0+1),3)$.  For $r\geq 11$, we have $2(q-1)=2(q_0^r-1)>|H|$, a contradiction. For $r=3$, $5$ or $7$, we have
\[
\begin{cases}
  |H|\equiv2\gcd((q_0^3+1)/(q_0+1),3)(q_0^2-1)\pmod{q_0^3-1} &\text{ for }r=3, \\
  |H|\equiv-\gcd((q_0^5+1)/(q_0+1),3)(q_0-1)\pmod{q_0^5-1} &\text{ for }r=5,\\
  |H|\equiv-\gcd((q_0^7+1)/(q_0+1),3)(q_0^6-q_0^5+q_0^3-q_0)\pmod{q_0^7-1} &\text{ for }r=7.
\end{cases}
\]
This implies by direct analysis that $q-1=q_0^r-1$ does not divide $|H|$, again a contradiction.

If $H$ is as in~(v), then $2(q-1)$ is coprime to $3$ as $q\equiv2\pmod{3}$. This implies that $2(q-1)$ divides $8$, which is not possible.

If $H$ is as in~(vi) or~(vii), then $H^\eta=\PSL_2(7)$ or $\A_6$, which implies that $H^\eta$ does not contain any dihedral group of order larger than $10$. However, $H^\eta$ contains a dihedral group of order $2(q-1)$ and $q>13$, a contradiction.
\end{proof}

\begin{lemma}\label{lem3}
In the notation of Construction~$\ref{cons1}$, $\langle X,Y,Z\rangle$ is an irreducible subgroup of $\SU_3(q)$.
\end{lemma}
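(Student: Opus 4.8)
The plan is to show that $\langle X, Y, Z\rangle$ acts irreducibly on the natural module $\bbF_{q^2}^3$ by contradiction: assuming reducibility, I would leverage the structure of $\langle Y, Z\rangle$ together with the specific action of $X$ to derive a violation of one of the inequalities \eqref{oc1}--\eqref{oc3} guaranteed by Lemma~\ref{exi1}. First I would record that $YZ = \diag(a, 1, a^{-1})$ has order $q-1$ by Lemma~\ref{lem1}, so $\langle Y, Z\rangle$ is a dihedral group of order $2(q-1)$ whose cyclic part is the diagonal torus. The eigenspaces of this diagonal torus are the three coordinate lines $\langle e_1\rangle$, $\langle e_2\rangle$, $\langle e_3\rangle$, and since $a$ has order $q-1$ (so $a, 1, a^{-1}$ are pairwise distinct for $q > 3$), these are precisely the one-dimensional invariant subspaces of $\langle YZ\rangle$. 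The involution $Y$ swaps $\langle e_1\rangle$ and $\langle e_3\rangle$ while fixing $\langle e_2\rangle$, so the $\langle Y, Z\rangle$-invariant subspaces are tightly constrained: the only invariant lines and planes are built from $\langle e_2\rangle$ and the pair $\{\langle e_1\rangle, \langle e_3\rangle\}$.

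Next I would suppose for contradiction that $\langle X, Y, Z\rangle$ is reducible, so it stabilizes some proper nonzero subspace $U$. Since $\langle Y, Z\rangle \leq \langle X, Y, Z\rangle$ also stabilizes $U$, the subspace $U$ must be one of the few $\langle Y, Z\rangle$-invariant subspaces identified above — essentially $\langle e_2\rangle$, $\langle e_1, e_3\rangle$, or their analogues. The heart of the argument is then to test the matrix $X$ against each of these candidate invariant subspaces: I would compute the image of a spanning vector of $U$ under $X$ and demand that it lie back in $U$. Because the entries of $X$ involve $b$ and the conditions for invariance will force polynomial relations among $a$, $a^{-1}$, $b$, and $b^{q+1}$, the requirement $XU \subseteq U$ should reduce — after using $b + b^q = 1$ and $b \ne b^q$ — to one of the forbidden equalities in \eqref{oc1}--\eqref{oc3} (with $i = 0$, since no Frobenius twist is involved in a direct invariance calculation). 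This contradicts the choice of $a$ from Lemma~\ref{exi1}, ruling out each candidate and establishing irreducibility.

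I expect the main obstacle to be the bookkeeping in the case analysis: one must be careful that $U$ could be either a line or a plane, and an invariant plane corresponds dually to an invariant line of the transpose-inverse (contragredient) action, so I would either handle planes directly or exploit the duality to halve the work. A subtle point is that the relevant $\langle Y, Z\rangle$-invariant subspaces are exactly $\langle e_2 \rangle$ and $\langle e_1, e_3\rangle$ as genuinely $\langle Y,Z\rangle$-stable spaces (any invariant line inside $\langle e_1, e_3\rangle$ would have to be one of the coordinate lines, which $Y$ does not individually fix), so the list of candidates $U$ is genuinely short. The cleanest execution checks that $e_2 \notin \langle e_2\rangle$ is violated by looking at the $(1,2)$ and $(3,2)$ entries of $X$ being nonzero, and that the plane $\langle e_1, e_3\rangle$ fails because the middle row $(1, 0, b^q)$ of $X$ sends $e_1$ outside that plane; translating these failures into the inequality form of \eqref{oc1}--\eqref{oc3} is where the defining relation $b + b^q = 1$ and the order-$(q-1)$ hypothesis on $a$ do the essential work. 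The remaining difficulty is purely computational and should be dispatched by the explicit entries of $X$, $Y$, $Z$ already recorded in Construction~\ref{cons1}.
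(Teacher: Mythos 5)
Your skeleton is sound and, properly executed, yields a proof that is if anything cleaner than the paper's. The paper takes an arbitrary invariant line $\langle u\rangle$ (reducing invariant planes to invariant lines of the transposed generators) and successively imposes $uZ\in\langle u\rangle$, $uYZ\in\langle u\rangle$, $uX\in\langle u\rangle$ to force $u=0$. You instead observe that any $\langle X,Y,Z\rangle$-invariant subspace is in particular $\langle Y,Z\rangle$-invariant, that $YZ=\diag(a,1,a^{-1})$ has three pairwise distinct eigenvalues (since $o(a)=q-1\geq4$), so its invariant subspaces are sums of coordinate lines, and that invariance under $Y$ then leaves only $\langle e_2\rangle$ and $\langle e_1,e_3\rangle$ as candidates; testing $X$ on these two subspaces finishes the job, because the image of $e_2$ has a nonzero first coordinate and the image of $e_1$ has a nonzero second coordinate. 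This organization buys a shorter case analysis than the paper's general-vector computation (and works equally well in your column-vector convention as in the paper's row-vector convention), while resting on the same underlying facts.

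However, you should delete every appeal to the conditions \eqref{oc1}--\eqref{oc3}: they play no role whatsoever in irreducibility, and your repeated assertion that the invariance requirements ``reduce to one of the forbidden equalities in \eqref{oc1}--\eqref{oc3} with $i=0$'' is false. Those inequalities constrain the traces of $YZ$, $XY$, $XZ$ and their Frobenius twists, and they are used only in Lemma~\ref{lem5} to prove $\Aut(\PSU_3(q),S)=1$. In the irreducibility argument the contradiction is far more primitive: invariance of $\langle e_2\rangle$ or $\langle e_1,e_3\rangle$ under $X$ would force specific entries of $X$ (namely $1$, $b$ or $b^q$) to vanish, which is impossible simply because $b\in\bbF_{q^2}^\times$. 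The only properties of $a$ and $b$ that enter are $a\notin\{1,-1\}$ and $b\neq0$; neither $b+b^q=1$, nor $b\neq b^q$, nor Lemma~\ref{exi1} is invoked (the paper's proof likewise uses none of them). Had you tried to carry out the announced ``translation into the inequality form of \eqref{oc1}--\eqref{oc3},'' you would have found nothing to translate; excising that step leaves your argument complete and correct.
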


\begin{proof}
Suppose for a contradiction that $\langle X,Y,Z\rangle$ is reducible. It follows that $\langle X,Y,Z\rangle$ stabilizes a subspace of $\bbF_{q^2}^3$ of dimension $1$ or $2$.

First assume that $\langle X,Y,Z\rangle$ stabilizes a subspace of $\bbF_{q^2}^3$ of dimension $1$, say $\langle u\rangle$, where $u=(u_1,u_2,u_3)$ with $u_1,u_2,u_3\in\bbF_{q^2}$. We have $uZ\in\langle u\rangle$, that is, $(u_3,-u_2,u_1)\in\langle(u_1,u_2,u_3)\rangle$. This implies that either
$u_2\neq 0$ and $u_1=-u_3$, or $u_2=0$ and $u_1=\pm u_3$.
Suppose $u_2\neq 0$ and $u_1=-u_3$. Since $uYZ=(au_1,u_2,-a^{-1}u_1)\in\langle u\rangle=\langle (u_1, u_2, -u_1)\rangle$ and $a\neq 1$, we have $u_1=0$.
Moreover, we derive from $uX=(u_2,0,b^qu_2)\in\langle u\rangle=\langle(0,u_2,0)\rangle$ that $u_2=0$, a contradiction.
Thus we have $u_2=0$ and $u_1=\pm u_3$. Note that $uYZ\in\langle u\rangle$, that is, $(au_1,0,a^{-1}u_3)\in \langle (u_1, 0, u_3)\rangle$.
Since $a^2\neq 1$, we conclude that $u_1=u_3=0$, and hence $u=0$, a contradiction.

Next assume that $\langle X,Y,Z\rangle$ stabilizes a subspace of $\bbF_{q^2}^3$ of dimension $2$. It follows that $\langle X^\T,Y^\T,Z^\T\rangle$ stabilizes a subspace of $\bbF_{q^2}^3$ of dimension $1$, suppose $\langle v\rangle$, where $v=(v_1,v_2,v_3)$ with $v_1,v_2,v_3\in\bbF_{q^2}$. We have $vZ^\T \in\langle v\rangle$, that is, $(v_3,-v_2,v_1)\in\langle(v_1,v_2,v_3)\rangle$.
This implies that either $v_2\neq 0$ and $v_1=-v_3$, or $v_2=0$ and $v_1=\pm v_3$.
Suppose $v_2\neq 0$ and $v_1=-v_3$. Since $vY^\T Z^\T=(a^{-1}v_1,v_2,-av_1)\in\langle v\rangle=\langle (v_1,v_2,-v_1)\rangle$ and $a\neq 1$, we have $v_1=0$.
Moreover, we derive from $vX^\T=(bv_2,0,v_2)\in\langle v\rangle=\langle(0,v_2,0)\rangle$ that $v_2=0$, a contradiction.
Thus we have $v_2=0$ and $v_1=\pm v_3$. Note that $vY^\T Z^\T\in\langle v\rangle$, that is, $(a^{-1}v_1,0,av_3)\in\langle (v_1,0,v_3)\rangle$.
Since $a^2\neq 1$, we conclude that $v_1=v_3=0$, and hence $v=0$, again a contradiction.
\end{proof}

\begin{lemma}\label{lem4}
In the notation of Construction~$\ref{cons1}$, $S$ is a generating set of $\PSU_3(q)$.
\end{lemma}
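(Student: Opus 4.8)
The plan is to argue by contradiction, treating $q>13$ by structure theory and $q\in\{5,7,9,11,13\}$ by direct computation in \magma. By Lemma~\ref{lem1} the elements $y,z$ are involutions with $o(yz)=q-1\geq4$, so $\langle y,z\rangle$ is a dihedral group of order $2(q-1)$ contained in $\langle x,y,z\rangle$. Suppose $\langle x,y,z\rangle\neq\PSU_3(q)$; then it lies in a maximal subgroup $\overline M$ of $\PSU_3(q)$. Setting $M=\eta^{-1}(\overline M)$, the correspondence theorem shows $M$ is a maximal subgroup of $\SU_3(q)$ containing $\Z(\SU_3(q))=\ker\eta$, and $H=\langle X,Y,Z\rangle\leq M$ with $M^\eta=\overline M\supseteq\langle y,z\rangle$. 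Hence $M^\eta$ contains a dihedral group of order $2(q-1)$, so Lemma~\ref{lem2} leaves only two possibilities for $q>13$: either $M$ is reducible, or $M=\SO_3(q)\times\C_{\gcd(q+1,3)}$ up to conjugacy.

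The reducible case is immediate: if $M$ stabilises a proper nonzero subspace of $\bbF_{q^2}^3$, then so does its subgroup $H$, contradicting Lemma~\ref{lem3}. The remaining case $M=\SO_3(q)\times\C_{\gcd(q+1,3)}$ is the genuine obstacle, because I must exclude \emph{every} $\SU_3(q)$-conjugate, not just the standard copy. The feature that makes the argument conjugation-proof is that the standard $\SO_3(q)$ consists of the real matrices $A$ (those with $\overline A=A$), for which the unitary relation $\overline A^\T WA=W$ becomes $A^\T WA=W$; together with the scalars this shows $\SO_3(q)\times\C_{\gcd(q+1,3)}$ preserves the symmetric form $W$ up to scalars, and therefore any conjugate, and in particular $H$, preserves some nondegenerate symmetric matrix $J$ up to scalars. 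Concretely there exist $\mu_X,\mu_Y,\mu_Z\in\bbF_{q^2}^\times$ with
\[
X^\T J X=\mu_X J,\qquad Y^\T J Y=\mu_Y J,\qquad Z^\T J Z=\mu_Z J,\qquad J=J^\T\text{ invertible}.
\]

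I would then pin down $J$ from $Z$ and $Y$ and contradict it with $X$. Since $Z$ is symmetric, $Z^\T J Z=\mu_Z J$ forces $\mu_Z=1$, $J_{11}=J_{33}$ and $J_{23}=-J_{12}$. Next, computing $Y^\T J Y=\mu_Y J$ and using that $a$ has order $q-1>4$ (so $a^2\neq1$ and $a^4\neq1$) forces $\mu_Y=1$ and $J_{11}=J_{12}=0$, leaving the anti-diagonal shape $J_{13}=J_{31}=u$, $J_{22}=r$ with $u\neq0$ by nondegeneracy. Finally, a short computation gives the $(1,2)$-entry of $X^\T J X$ as $u(b-b^q)$, which must equal $\mu_X J_{12}=0$; since $u\neq0$ this forces $b=b^q$, contradicting $b\neq b^q$. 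This completes the odd-characteristic generation argument for $q>13$, and the small cases are cleared computationally.

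I expect the main difficulty to be exactly this last case. Eigenvalue data alone cannot separate $H$ from $\SO_3(q)$: each of $x,y,z$ is an involution and each of $xy,xz,yz$ has $1$ as an eigenvalue (their traces lie in $\bbF_q$, forcing $t=1$ to be a root of the characteristic polynomial), so every relevant element is individually compatible with $\SO_3(q)$. Reformulating containment in a conjugate of $\SO_3(q)\times\C_{\gcd(q+1,3)}$ as preservation of a symmetric bilinear form up to scalars, and then exploiting the asymmetry between $b$ and $b^q$ in a single off-diagonal entry of $X^\T J X$, is what breaks the deadlock; the large order of $a$ enters only to force $J$ into anti-diagonal form.
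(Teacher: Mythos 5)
Your proposal is correct and follows essentially the same route as the paper: \magma for $q\in\{5,7,9,11,13\}$, then Lemmas~\ref{lem1}--\ref{lem3} to reduce to the $\SO_3(q)\times\C_{\gcd(q+1,3)}$ case, and finally the preservation of a nondegenerate symmetric form $J$, pinned down first by $Z$ and $Y$ and then contradicted by the $(1,2)$-entry $u(b-b^q)$ of $X^\T JX$, exactly as in the paper's computation. The only cosmetic differences are that you allow projective (up-to-scalar) preservation of $J$ and rule out the scalars via $a^4\neq1$, whereas the paper places $X,Y,Z$ in $\SO_3(q)$ itself by noting the quotient has odd order while $X,Y,Z$ are involutions, making all scalars equal to $1$ from the start.
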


\begin{proof}
It suffices to prove $\langle X,Y,Z\rangle=\SU_3(q)$. For $q\in\{5,7,9,11,13\}$, computation in \magma~\cite{magma} directly verifies the conclusion. Now suppose that $q>13$ and $\langle X,Y,Z\rangle\neq\SU_3(q)$. It means that $\langle X,Y,Z\rangle$ is contained in a maximal subgroup $H$ of $\SU_3(q)$. From Lemma \ref{lem1}, we deduce that $\langle y,z\rangle$ is a dihedral group of order $2(q-1)$. Thus $H^\eta$ contains a dihedral group of order $2(q-1)$. As a consequence of Lemmas \ref{lem2} and \ref{lem3}, we have $H=\SO_3(q)\times\C_{\gcd(q+1,3)}$.

By Lemma \ref{lem1}, we see that $X$, $Y$ and $Z$ all have order divisible by $2$. Since $|H/\SO_3(q)|=\gcd(q+1,3)$ is odd, we conclude that $X,Y,Z\in\SO_3(q)$. This means that there exists $B\in\GL_3(q^2)$ with $B^\T=B$ and $X^\T BX=Y^\T BY=Z^\T BZ=B$. Write
\[
B=
\begin{pmatrix}
b_{11}&b_{12}&b_{13}\\
b_{12}&b_{22}&b_{23}\\
b_{13}&b_{23}&b_{33}
\end{pmatrix}.
\]
We derive from $Z^\T BZ=B$ that
\[
\begin{pmatrix}
b_{33}&-b_{23}&b_{13}\\
-b_{23}&b_{22}&-b_{12}\\
b_{13}&-b_{12}&b_{11}
\end{pmatrix}
=
\begin{pmatrix}
b_{11}&b_{12}&b_{13}\\
b_{12}&b_{22}&b_{23}\\
b_{13}&b_{23}&b_{33}
\end{pmatrix},
\]
which implies that $b_{11}=b_{33}$ and $-b_{23}=b_{12}$. Next we derive from $Y^\T BY=B$ that
\[
\begin{pmatrix}
a^{-2}b_{11}&a^{-1}b_{12}&b_{13}\\
a^{-1}b_{12}&b_{22}&-ab_{12}\\
b_{13}&-ab_{12}&a^2b_{11}
\end{pmatrix}
=
\begin{pmatrix}
b_{11}&b_{12}&b_{13}\\
b_{12}&b_{22}&-b_{12}\\
b_{13}&-b_{12}&b_{11}
\end{pmatrix}.
\]
Since $a\neq 1$ and $a^2\neq1$, we have $b_{12}=b_{11}=0$.
Thus
\[
B=
\begin{pmatrix}
0&0&b_{13}\\
0&b_{22}&0\\
b_{13}&0&0
\end{pmatrix}.
\]
Without loss of generality, we assume
\[
B=
\begin{pmatrix}
0&0&1\\
0&d&0\\
1&0&0
\end{pmatrix}
\]
with some $d\in \bbF_{q^2}^\times$.
Now we derive from $X^\T BX=B$ that
\[
\begin{pmatrix}
-2b^q+d & b-b^q & 2b^{q+1}+db^q\\
b-b^q & 2b & b^{q+1}-b^2\\
2b^{q+1}+db^q & b^{q+1}-b^2 & -2b^{q+2}+db^{2q}
\end{pmatrix}
=
\begin{pmatrix}
0&0&1\\
0&d&0\\
1&0&0
\end{pmatrix},
\]
from which we deduce $b=b^q$, contradicting the conditions for $b$ in Construction \ref{cons1}.
\end{proof}

\begin{lemma}\label{lem5}
In the notation of Construction~$\ref{cons1}$, $\Aut(\PSU_3(q),S)=1$.
\end{lemma}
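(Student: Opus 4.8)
The plan is to prove that every $\alpha\in\Aut(\PSU_3(q),S)$ is trivial. Such an $\alpha$ permutes the connection set $S=\{x,y,z\}$; write $\pi$ for the induced permutation of $\{x,y,z\}$. Since $S$ generates $\PSU_3(q)$ by Lemma~\ref{lem4}, once I show $\pi=1$ it follows that $\alpha$ fixes a generating set pointwise and hence $\alpha=1$. So the whole task is to rule out $\pi\neq1$.

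To do so I would pass to $\SU_3(q)$. Every automorphism of $\PSU_3(q)$ is induced by conjugation by some $g\in\GU_3(q)$ followed by a field automorphism $\phi^i$, where $\phi$ is the $p$-power map, the inner-diagonal automorphisms form $\PGU_3(q)$, and $\Aut(\PSU_3(q))/\PGU_3(q)$ is cyclic of order $2f$ generated by the image of $\phi$. Thus I lift $\alpha$ to $\hat\alpha\colon M\mapsto g^{-1}M^{\phi^i}g$ on $\SU_3(q)$ with $0\le i<2f$. Because $\ker\eta=\Z(\SU_3(q))$ has order $\gcd(3,q+1)$, which is coprime to $2$, each of $x,y,z$ has a unique involutory preimage (namely $X,Y,Z$), so $\hat\alpha$ permutes $\{X,Y,Z\}$ exactly by $\pi$, i.e. $\hat\alpha(X)=\pi(X)$ and similarly for $Y,Z$. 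From this I extract two facts. First, since $X,Y,Z$ generate $\SU_3(q)$ (Lemma~\ref{lem3}), $\hat\alpha^{o(\pi)}$ fixes all three and is therefore the identity; projecting onto the order-$2f$ quotient $\Aut(\PSU_3(q))/\PGU_3(q)$ shows that $\phi^{o(\pi)i}$ is trivial there, i.e. $2f\mid o(\pi)\,i$. Second, $\mathrm{tr}(\hat\alpha(M))=\mathrm{tr}(M)^{p^i}$ for all $M$.

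Next I would compute the three traces $t_1=\mathrm{tr}(YZ)=a+a^{-1}+1$, $t_2=\mathrm{tr}(XZ)=1+b^{q+1}$ and $t_3=\mathrm{tr}(XY)=a+a^{-1}b^{q+1}$ from the matrices of Construction~\ref{cons1}. As an element of order $q-1$ in $\bbF_{q^2}^\times$ lies in $\bbF_q$ and $b^{q+1}$ is a norm, all three $t_j$ lie in $\bbF_q$, so $t_j^{p^i}$ depends only on $i\bmod f$. If $\pi$ is a transposition, then $o(\pi)=2$ gives $f\mid i$, so $\phi^i$ fixes $\bbF_q$ pointwise; tracing the image under $\hat\alpha$ of the appropriate product among $YZ,XZ,XY$ then forces one of $t_1=t_2$, $t_1=t_3$, $t_2=t_3$, contradicting \eqref{oc2}, \eqref{oc1} or \eqref{oc3}. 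If $\pi$ is a $3$-cycle, then $o(\pi)=3$ gives $2f\mid 3i$, whence $(f/\gcd(3,f))\mid i$ and so $i\bmod f$ lies in $I\bmod f$; now $\hat\alpha$ carries $YZ$ either to $XZ$ or to $XY$, so that $t_1^{p^i}=t_2$ or $t_1^{p^i}=t_3$, contradicting \eqref{oc2} or \eqref{oc1} precisely because $i\bmod f\in I$. Hence $\pi=1$ and therefore $\alpha=1$.

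The routine parts are the three trace computations and the bookkeeping of which product is sent to which under each $\pi$. The step I expect to be most delicate is the reduction to $\SU_3(q)$: verifying that the unique involutory preimages make $\pi$ lift exactly to a permutation of $\{X,Y,Z\}$, and converting $\hat\alpha^{o(\pi)}=\mathrm{id}$ into the divisibility $2f\mid o(\pi)\,i$ through the known structure of $\Aut(\PSU_3(q))$. This is exactly what confines the field-automorphism exponent $i$ to the residues recorded in $I$, and thereby lets the inequalities \eqref{oc1}--\eqref{oc3} of Lemma~\ref{exi1} finish the argument.
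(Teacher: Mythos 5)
Your proof is correct and takes essentially the same approach as the paper's: express $\alpha$ as an inner--diagonal automorphism composed with a field automorphism $\phi^i$, pin down the admissible exponents $i$ from the order of the permutation that $\alpha$ induces on $S$, and compare conjugation-invariant data of the products $YZ$, $XZ$, $XY$ against conditions \eqref{oc1}--\eqref{oc3}. The only differences are cosmetic --- the paper compares characteristic polynomials and carries an explicit scalar $s$, whereas you compare traces after lifting $\alpha$ exactly to $\SU_3(q)$ via the unique involutory preimages (a slightly cleaner device) --- plus one citation slip: the generation $\langle X,Y,Z\rangle=\SU_3(q)$ is established in the proof of Lemma~\ref{lem4}, not Lemma~\ref{lem3}, which only gives irreducibility.
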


\begin{proof}
Let $\alpha$ be an arbitrary element of $\Aut(\PSU_3(q),S)$. We aim to show that $\alpha$ fixes every element of $S$. Since Lemma~\ref{lem4} asserts that $S$ generates $\PSU_3(q)$, this will lead to $\alpha=1$ and hence $\Aut(\PSU_3(q),S)=1$.

Recall that the characteristic polynomial of $YZ$ is
\[
\lambda^3-(a+a^{-1}+1)\lambda^2+(a+a^{-1}+1)\lambda-1.
\]
Direct calculation also shows that the characteristic polynomials of
\[
XY=
\begin{pmatrix}
a^{-1}b^{q+1}&-b&-ab^{q}\\
a^{-1}b^q&0&a\\
-a^{-1}b&-1&a
\end{pmatrix}
\quad \text{and} \quad
XZ=
\begin{pmatrix}
b^{q+1}&-b&b^{q}\\
b^q&0&1\\
-b&-1&1
\end{pmatrix},
\]
are
\[
\lambda^3-(a+a^{-1}b^{q+1})\lambda^2+(a+a^{-1}b^{q+1})\lambda-1
\]
and
\[
\lambda^3-(1+b^{q+1})\lambda^2+(1+b^{q+1})\lambda-1
\]
respectively.

Since the order of $\alpha$ is $1$, $2$ or $3$, we deduce that $\alpha\PGU_3(q)$ has order $1$, $2$ or $3$ in
\[
\Aut(\PSU_3(q))/\PGU_3(q)=\langle\phi\PGU_3(q)\rangle,
\]
where $\phi$ is the field automorphism of order $2f$.
Hence $\alpha\PGU_3(q)=\phi^{i}\PGU_3(q)$ for some $i\in\left\{f, 2f, 2f/\gcd(3,f), 4f/\gcd(3,f)\right\}$. By abuse of notation we also denote by $\phi$ the corresponding automorphism of $\SU_3(q)$ mapping each entry of a matrix to its $p$th power.

We first confirm that $\alpha$ fixes $x$ by showing that $(yz)^\alpha\notin\{xy,yx,xz,zx\}$.

(i) Suppose that $(yz)^\alpha=xy$ (or $yx$). It follows that
\[
sD^{-1}(YZ)^{\phi^{i}}D=XY
\]
for some $s\in\bbF_{q^2}^\times$ and $D\in\GU_3(q)$. Hence $s(YZ)^{\phi^{i}}=D(XY)D^{-1}$
has the same characteristic polynomial as $XY$, that is,
\begin{align*}
&\lambda^3-s(a+a^{-1}+1)^{p^{i}}\lambda^2+s^2(a+a^{-1}+1)^{p^{i}}\lambda-s^3\\
=&\lambda^3-(a+a^{-1}b^{q+1})\lambda^2+(a+a^{-1}b^{q+1})\lambda-1.
\end{align*}
We deduce that $(a+a^{-1}+1)^{p^{i}}=a+a^{-1}b^{q+1}$, contradicting \eqref{oc1}.

(ii) Suppose that $(yz)^\alpha=xz$ (or $zx$). Similarly, we have
\begin{align*}
&\lambda^3-t(a+a^{-1}+1)^{p^{i}}\lambda^2+t^2(a+a^{-1}+1)^{p^{i}}\lambda-t^3\\
=&\lambda^3-(1+b^{q+1})\lambda^2+(1+b^{q+1})\lambda-1.
\end{align*}
for some $t\in\bbF_{q^2}^\times$. Hence $(a+a^{-1}+1)^{p^{i}}=1+b^{q+1}$, which contradicts \eqref{oc2}.

To finish the proof, we only need to show that $\alpha$ cannot swap $y$ and $z$.

Suppose for a contradiction that $\alpha$ swaps $y$ and $z$. It implies that $\alpha$ has order $2$, and so $\alpha\PGU_3(q)=\phi^{j}\PGU_3(q)$ for some $i\in\left\{f,2f\right\}$.
As $(xy)^\alpha=x^\alpha y^\alpha=xz$, it follows that
\begin{align*}
&\lambda^3-r(a+a^{-1}b^{q+1})^{p^{i}}\lambda^2+r^2(a+a^{-1}b^{q+1})^{p^{i}}\lambda-r^3\\
=&\lambda^3-r(a+a^{-1}b^{q+1})\lambda^2+r^2(a+a^{-1}b^{q+1})\lambda-r^3\\
=&\lambda^3-(1+b^{q+1})\lambda^2+(1+b^{q+1})\lambda-1.
\end{align*}
for some $r\in\bbF_{q^2}^\times$. Hence $a+a^{-1}b^{q+1}=a+a^{-1}+1$, contradicting \eqref{oc3}.

This shows that $\alpha$ fixes every element of $S$, as desired.
\end{proof}

\begin{theorem}\label{thm2}
For each odd prime power $q\geq5$, the graph $\Gamma(q)$ as in Construction~$\ref{cons1}$ is a cubic GRR of $\PSU_3(q)$.
\end{theorem}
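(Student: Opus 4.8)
The plan is to prove $A:=\Aut(\Gamma(q))=G$, where $G:=\PSU_3(q)$ is identified with its image in $A$ under the right regular representation. By Lemma~\ref{lem1} the connection set $S=\{x,y,z\}$ consists of three involutions, so $\Gamma(q)$ is a cubic graph, and by Lemma~\ref{lem4} it is connected; hence $A$ is vertex-transitive and $|A|=|A_1|\,|G|$, where $A_1$ is the stabiliser of the vertex $1$. The standard identity $N_A(G)=G\rtimes\Aut(G,S)$ together with $\Aut(G,S)=1$ from Lemma~\ref{lem5} yields $N_A(G)=G$. Consequently it suffices to prove $G\trianglelefteq A$, for then $A=N_A(G)=G$ and $\Gamma(q)$ is a GRR.

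To establish normality I would argue by contradiction. If $G\ntrianglelefteq A$, then, as $G$ is simple, the normal core of $G$ in $A$ is trivial, so the action of $A$ on the $|A:G|=|A_1|$ right cosets of $G$ is faithful and embeds $A$ into $\Sym(|A_1|)$. Hence $|G|=|\PSU_3(q)|\le|A_1|!$, which is impossible as soon as $|A_1|\le6$, because $|\PSU_3(q)|\ge|\PSU_3(5)|=126000>720=6!$. Thus the whole theorem reduces to the bound $|A_1|\le6$. Since $A_1$ acts on the three neighbours $\{x,y,z\}$ of $1$, giving a homomorphism $A_1\to\Sym(\{x,y,z\})$, this in turn reduces to showing that the kernel $A_1^{[1]}$ of this action—the pointwise stabiliser of $1$ together with $x,y,z$—is trivial.

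The heart of the argument is therefore the proof that $A_1^{[1]}=1$, and for this I would exploit the orders of the pairwise products of the generators. From $YZ=\diag(a,1,a^{-1})$ we get $o(yz)=q-1$, while the characteristic polynomials of $XY$ and $XZ$ computed in Lemma~\ref{lem5} each have $1$ as a root, so $o(xy)$ and $o(xz)$ are governed by the remaining reciprocal eigenvalue pairs $\mu^{\pm1}$ and $\nu^{\pm1}$ with $\mu+\mu^{-1}=a+a^{-1}b^{q+1}-1$ and $\nu+\nu^{-1}=b^{q+1}$. For any two involutions $s,t\in S$ the $\{s,t\}$-coloured edges of $\Gamma(q)$ form a disjoint union of cycles of length $2\,o(st)$, so the three colour-pairs give cycles of lengths $2(q-1)$, $2\,o(xy)$ and $2\,o(xz)$. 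When these three lengths are pairwise distinct, each edge of $\Gamma(q)$ is distinguished by the (unordered) pair of lengths of the two two-coloured cycles through it, and this local invariant lets one argue that an element of $A_1^{[1]}$ must fix every two-coloured cycle through $1$ setwise, hence pointwise (it already fixes two adjacent vertices on each), and then propagate this rigidity through the connected graph via the cubic ``two-out-of-three'' principle to conclude that it is the identity.

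The step I expect to be the main obstacle is making this last argument rigorous, together with the required distinctness of $o(xy)$, $o(yz)=q-1$ and $o(xz)$. The distinctness is an arithmetic condition on $a$ and $b$ that is not literally among \eqref{oc1}--\eqref{oc3}; I would either deduce it from those conditions and $o(a)=q-1$ or add it as a further genericity requirement in the spirit of Lemma~\ref{exi1}, checking the finitely many small $q\in\{5,7,9,11,13\}$ by the \magma computation already invoked for Lemma~\ref{lem4}. More seriously, the cycle-length invariant is only genuinely automorphism-invariant if one controls the cycle spectrum of $\Gamma(q)$—specifically, if through each edge the two-coloured cycles are the \emph{unique} cycles of their respective lengths—since otherwise an automorphism fixing an edge could carry a two-coloured cycle onto a different cycle of the same length. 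Securing this local uniqueness, rather than the group-theoretic reductions in the first two paragraphs, is where the real work of the proof lies.
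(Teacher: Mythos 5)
Your reduction is sound as far as it goes: Lemmas~\ref{lem1}, \ref{lem4} and~\ref{lem5}, together with $N_A(G)=G\rtimes\Aut(G,S)$ and the core argument, correctly reduce the theorem to showing that the kernel $A_1^{[1]}$ of the action of $A_1$ on the three neighbours of the vertex $1$ is trivial. But that is precisely where the proposal stops being a proof, and it is exactly the content of the result the paper quotes: the paper's proof of Theorem~\ref{thm2} consists of citing \cite[Theorem~1.3]{FLWX2002} --- for a connection set $S$ of size three, $\Cay(\PSU_3(q),S)$ is a GRR if and only if $\langle S\rangle=\PSU_3(q)$ and $\Aut(\PSU_3(q),S)=1$ --- and then applying Lemmas~\ref{lem4} and~\ref{lem5}. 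All of the genuine difficulty that you are trying to handle by hand is outsourced to that cited theorem, which is proved by group-theoretic means, not by local combinatorics of the graph.

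The gap in your substitute argument is the one you flag yourself, and it is fatal rather than cosmetic. Bichromatic cycles are not automorphism-invariant objects: an element of $A_1^{[1]}$ fixes $1$, $x$, $y$, $z$, but need not send the $\{y,z\}$-coloured cycle through $1$ to any bichromatic cycle at all --- only to some cycle of the same length --- so ``fix each two-coloured cycle setwise, hence pointwise, then propagate'' cannot even start unless one proves that through each edge the bichromatic cycle is the \emph{unique} cycle of its length. Nothing in your proposal (or in the paper) controls the cycle spectrum of $\Gamma(q)$, and there is no reason to expect such uniqueness in a Cayley graph of a simple group. The difficulty is real: connected cubic vertex-transitive graphs can have arbitrarily large vertex-stabilisers, with $A_1^{[1]}$ enormous, so $A_1^{[1]}=1$ is not a generic fact about cubic graphs; it has to come from specific structure, which is what \cite[Theorem~1.3]{FLWX2002} supplies. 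A secondary unfixed issue is that the pairwise distinctness of $o(xy)$, $o(yz)$ and $o(xz)$ that your invariant requires does not follow from \eqref{oc1}--\eqref{oc3}: those conditions constrain traces (coefficients of characteristic polynomials), not element orders. In sum, the proposal establishes a correct reduction but leaves the theorem unproved; the missing step is precisely the theorem of Fang, Li, Wang and Xu that the paper's two-line proof invokes.
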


\begin{proof}
By \cite[Theorem~1.3]{FLWX2002}, for any connection set $S$ of size three, the graph $\Cay(\PSU_3(q),S)$ is a GRR of $\PSU_3(q)$ if and only if $\langle S\rangle=\PSU_3(q)$ and $\Aut(\PSU_3(q),S)=1$. Now let $S$ be the set in Construction~\ref{cons1}. Lemma~\ref{lem4} shows that $\langle S\rangle=\PSU_3(q)$, and Lemma~\ref{lem5} shows that $\Aut(\PSU_3(q),S)=1$. Hence $\Gamma(q)=\Cay(\PSU_3(q),S)$ is a cubic GRR of $\PSU_3(q)$.
\end{proof}

\section{Even characteristic}

Let $q=2^f\geq4$. Denote the projection from $\SU_3(q)$ to $\PSU_3(q)$ by $\eta$.

\begin{lemma}\label{exi2}
Let $q=2^f\geq4$ and $I=\left\{0, f, 2f/\gcd(3,f), 4f/\gcd(3,f)\right\}$. For an element $b$ in $\bbF_{q^2}^\times$ satisfying $b+b^q=1$ and $b^{q+1}\neq 1$, there exists an element $a$ of order $q+1$ such that
\begin{equation}\label{ec1}
(1+b+b^2)^{2^{i}}\neq a+a^{-1}+1 \text{ for any } i\in I,
\end{equation}
\begin{equation}\label{ec2}
(1+b+b^2)^{2^{i}}\neq a(1+b+b^3+b^4)+a^{-1}(b^2+b^3+b^4) \text{ for any } i\in I,
\end{equation}
\begin{equation}\label{ec3}
a+a^{-1}+1\neq a(1+b+b^3+b^4)+a^{-1}(b^2+b^3+b^4),
\end{equation}
and
\begin{equation}\label{ec4}
a+a^{-1}+1\neq 0.
\end{equation}
\end{lemma}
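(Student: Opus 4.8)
The plan is to mirror the counting argument of Lemma~\ref{exi1}. Writing $c=1+b+b^2$, $\gamma_1=1+b+b^3+b^4$ and $\gamma_2=b^2+b^3+b^4$, let $S$ be the set of elements $a$ of order $q+1$ satisfying \eqref{ec1}--\eqref{ec4}, let $S_0$ be the set of all elements of order $q+1$ in $\bbF_{q^2}^\times$ (so that $|S_0|=\varphi(q+1)$), and for each $i\in I$ put
\[
S_{1,i}=\{a\in S_0:c^{2^i}=a+a^{-1}+1\},\qquad S_{2,i}=\{a\in S_0:c^{2^i}=a\gamma_1+a^{-1}\gamma_2\},
\]
\[
S_3=\{a\in S_0:a+a^{-1}+1=a\gamma_1+a^{-1}\gamma_2\}.
\]
Then $S_0\setminus S$ is contained in $\big(\bigcup_{i\in I}(S_{1,i}\cup S_{2,i})\big)\cup S_3$ together with the elements violating \eqref{ec4}, and the goal is to bound these and show $|S|>0$.

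First I would dispose of \eqref{ec4}: since $a$ has order $q+1$, the equality $a+a^{-1}+1=0$ gives $a^2+a+1=0$ and hence $a^3=1$, which is impossible for $a\in S_0$ because $q\geq4$ forces $q+1\geq5$. Thus \eqref{ec4} holds for every element of $S_0$. The key structural observation — and what makes this case lighter than Lemma~\ref{exi1} — is that in \eqref{ec1} and \eqref{ec2} the Frobenius exponent $2^i$ is applied only to the constant $c$, never to $a$. Consequently, after multiplying through by $a$, each defining equation of $S_{1,i}$, $S_{2,i}$ and $S_3$ becomes a polynomial of degree at most $2$ in $a$ that is not identically zero — for $S_{1,i}$ and $S_3$ because the coefficients of $a^2$ and of $a$ are $1$, and for $S_{2,i}$ because its constant term $\gamma_2=b^2(1+b+b^2)$ is nonzero — so each of these sets has at most two elements. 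The required fact $1+b+b^2\neq0$ I would deduce from the hypotheses: if $1+b+b^2=0$ then $b$ is a primitive cube root of unity, which forces $b^{q+1}=1$ when $f$ is odd and $b+b^q=0$ when $f$ is even, each contradicting an assumption on $b$.

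Next I would split on $\gcd(3,f)$, as in Lemma~\ref{exi1}. When $\gcd(3,f)=1$ we have $I=\{0,f,2f,4f\}$, and because $2^{2f}=q^2$ and $2^{4f}=q^4$ act trivially on $\bbF_{q^2}$, the three indices $i\in\{0,2f,4f\}$ all give $c^{2^i}=c$; hence $\bigcup_{i\in I}S_{1,i}$ and $\bigcup_{i\in I}S_{2,i}$ are each cut out by only the two equations coming from $i\in\{0,f\}$, giving a total of at most $4+4+2=10$ excluded elements. When $\gcd(3,f)=3$ we have $I=\{0,f,2f/3,4f/3\}$ with four distinct Frobenius powers, and the crude bound is $8+8+2=18$.

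It then remains to see that $\varphi(q+1)$ exceeds the excluded count. I would invoke \cite[Lemma~2.1]{PT2015} in the form $\varphi(q+1)>(q+1)^{2/3}$ to settle all large $q$ at once, and handle the finitely many small values directly: for $q\in\{16,32,64\}$ one checks $\varphi(q+1)\in\{16,20,48\}$, which already beats the respective bounds $10$ and $18$, while the two genuinely tight cases $q\in\{4,8\}$ (where $\varphi(q+1)\in\{4,6\}$) are verified by exhibiting a suitable $a$ through computation in \magma~\cite{magma}. Choosing any $a\in S$ completes the proof. I expect the only real obstacle to be the bookkeeping establishing the degree-$2$ bound on each bad set; once one notices that the Frobenius never acts on $a$, the argument collapses to counting roots of quadratics against $\varphi(q+1)$.
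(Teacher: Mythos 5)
Your proposal is correct and takes essentially the same approach as the paper: the same sieve decomposition into the bad sets $S_{1,i}$, $S_{2,i}$, $S_3$ (plus the elements violating \eqref{ec4}), the same bound of at most two elements per set via root-counting of quadratics, comparison against $\varphi(q+1)$ for large $q$, and computer verification for the smallest fields. Your bookkeeping is in fact slightly sharper than the paper's --- you note that \eqref{ec4} excludes no element of $S_0$ at all, that the indices $i\in\{0,2f,4f\}$ give identical conditions when $\gcd(3,f)=1$, and you explicitly verify $1+b+b^2\neq 0$ (which is genuinely needed so that the quadratic defining $S_{2,i}$ is not identically zero, a point the paper passes over with ``it is clear that'') --- but these are refinements within the same argument, not a different route.
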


\begin{proof}
Let $S$ be the set of elements $a$ of order $q+1$ in $\bbF_{q^2}^\times$ satisfying the conditions \eqref{ec1}--\eqref{ec4}, let $S_0$ be the set of elements of order $q+1$ in $\bbF_{q^2}^\times$, let
\[
S_{1,i}=\{a\in S_0: (1+b+b^2)^{2^{i}}= a+a^{-1}+1\},
\]
\[
S_{2,i}=\{a\in S_0: (1+b+b^2)^{2^{i}}= a(1+b+b^3+b^4)+a^{-1}(b^2+b^3+b^4)\}
\]
for each $i\in I$ and let
\[
S_3=\{a\in S_0: a+a^{-1}+1= a(1+b+b^3+b^4)+a^{-1}(b^2+b^3+b^4)\},
\]
\[
S_4=\{a\in S_0: a+a^{-1}+1= 0\}.
\]
It is clear that $|S_{1,i}|\leq 2$, $|S_{2,i}|\leq 2$, $|S_3|\leq 2$ and $|S_4|\leq 2$.

For $q\in\{2^2, 2^3, 2^4, 2^5\}$, the conclusion $|S|>0$ can be directly verified. Note that $\varphi(q+1)>20$ for $q=2^f\geq 2^{6}$. Thus for $q\geq 2^{6}$, we have
\[
|S|\geq|S_0|-\left(\sum_{i\in I}|S_{1,i}|+\sum_{i\in I}|S_{2,i}|+|S_{3}|+|S_{4}|\right)\geq\varphi(q+1)-20>0.
\]

Hence there exists $a\in S$, which completes the proof.
\end{proof}

\begin{construction}\label{cons2}
Let $q=2^f\geq4$. Let $b$ be an element in $\bbF_{q^2}^\times$ satisfying $b+b^q=1$ and $b^{q+1}\neq 1$, and let $a$ be an element of order $q+1$ satisfying the conditions \eqref{ec1}--\eqref{ec4} in Lemma \ref{exi2}. Moreover, let
\[
X=
\begin{pmatrix}
b&1&1\\
b^q&0&1\\
b^{q+1}&b&b^q
\end{pmatrix},
\quad Y=
\begin{pmatrix}
ab+a^{-1}b^q&0&a(b+b^{q+1})+a^{-1}(b^q+b^{q+1})\\
0&1&0\\
a+a^{-1}&0&ab+a^{-1}b^q
\end{pmatrix},
\]
\[
\quad Z=
\begin{pmatrix}
1&0&1\\
0&1&0\\
0&0&1
\end{pmatrix},
\]
$x=X^\eta$, $y=Y^\eta$, $z=Z^\eta$, $S=\{x,y,z\}$ and $\Gamma(q)=\Cay(\PSU_3(q),S)$.
\end{construction}

Note in the above construction that $X,Y,Z$ are indeed elements of $\SU_3(q)$.

\begin{lemma}\label{lem6}
In the notation of Construction~$\ref{cons2}$, $o(x)=o(y)=o(z)=2$, and $o(zy)=q+1$.
\end{lemma}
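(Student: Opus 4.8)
The plan is to mirror the structure of Lemma~\ref{lem1} exactly, since the statement is the even-characteristic analogue of that result. First I would confirm that none of $x$, $y$, $z$ is the identity in $\PSU_3(q)$ and that each of $X$, $Y$, $Z$ squares to the identity matrix. For $Z$ this is immediate, as $Z$ is a transvection with $Z^2=I$ in characteristic $2$. For $X$ and $Y$ the claim $X^2=Y^2=I$ is a direct (if slightly tedious) matrix computation using $b+b^q=1$; I would carry out the multiplication and repeatedly substitute $b^q=1+b$ and $b^{q+1}=b+b^2$ to collapse the entries to those of the identity. The symbolic relation $b+b^q=1$ is the key simplifying identity throughout.

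Next I would upgrade the matrix-level involution property to the group level. Since $\ker(\eta)=\Z(\SU_3(q))$ has order $\gcd(3,q+1)$, which is odd, the projection $\eta$ does not merge the order-$2$ matrices with the identity, so $o(x)=o(y)=o(z)=2$ follows from $X^2=Y^2=Z^2=I$ together with $X,Y,Z\neq\pm I$. This is the same coprimality argument used in Lemma~\ref{lem1}, and it transfers verbatim because $\gcd(3,q+1)$ is odd for every $q$.

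For the order of $zy$, I would compute the product $ZY$ explicitly and identify its eigenvalue structure. The natural expectation, matching the odd case where $YZ$ was diagonal with entries $a,1,a^{-1}$, is that $ZY$ (or a conjugate of it) has characteristic polynomial factoring with roots $a$, $1$, $a^{-1}$, so that the matrix order is the order of $a$, namely $q+1$. Concretely I would multiply $Z$ and $Y$, then verify that the resulting matrix is diagonalizable over $\bbF_{q^2}$ with eigenvalues $a$ and $a^{-1}$ (the third being $1$), using condition~\eqref{ec4} that $a+a^{-1}+1\neq0$ to guarantee $a\neq a^{-1}$ and that the eigenvalues are genuinely distinct. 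Since $a$ has order exactly $q+1$, this gives $o(zy)=q+1$ at the matrix level, and the oddness of $\gcd(3,q+1)$ again ensures the order is preserved under $\eta$ (one checks that no nontrivial scalar in $\ker(\eta)$ can reduce the order, because the relevant power of $zy$ cannot be a central scalar unless it is trivial).

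The main obstacle is the matrix arithmetic for $ZY$: unlike the odd case, where $Y$ and $Z$ were chosen so that $YZ$ was already diagonal, here the product $ZY$ is a full matrix whose entries involve $a$, $a^{-1}$ and polynomials in $b$, so determining its order requires genuinely computing its eigenvalues rather than reading them off. I would handle this by computing the characteristic polynomial of $ZY$ and showing it equals $(\lambda-a)(\lambda-1)(\lambda-a^{-1})$ after simplification via $b+b^q=1$; the determinant being $1$ and the trace being $a+a^{-1}+1$ will pin down the polynomial, and distinctness of the roots (hence diagonalizability and exact order $q+1$) follows from~\eqref{ec4}.
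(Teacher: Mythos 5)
Your proposal follows the paper's proof essentially step for step: verify $X^2=Y^2=Z^2=1$ using $b+b^q=1$, use the oddness of $|\ker(\eta)|=\gcd(3,q+1)$ to conclude $o(x)=o(y)=o(z)=2$, and compute the characteristic polynomial of $ZY$, which factors as $(\lambda+a)(\lambda+a^{-1})(\lambda+1)$, to get $o(zy)=q+1$. One small correction: condition \eqref{ec4} is neither needed nor sufficient to guarantee the eigenvalues are distinct (in characteristic $2$, if $a=a^{-1}$ then $a+a^{-1}+1=1\neq0$, so \eqref{ec4} would still hold); distinctness follows simply because $a$ has order $q+1\geq 5$, whence $a\neq 1$ and $a^2\neq 1$.
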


\begin{proof}
It is clear that none of $x$, $y$ and $z$ is the identity and it is straightforward to verify that $X^2=Y^2=Z^2=1$. Since $|\ker(\eta)|=|\Z(\SU_3(q))|=\gcd(3,q+1)$ is coprime to $2$, we have $o(x)=o(y)=o(z)=2$. From the characteristic polynomial of
\[
ZY=
\begin{pmatrix}
ab^q+a^{-1}b&0&(a+a^{-1})b^{q+1}\\
0&1&0\\
a+a^{-1}&0&ab+a^{-1}b^q
\end{pmatrix},
\]
which is
\begin{eqnarray*}
&&
\begin{vmatrix}
\lambda+ab^q+a^{-1}b&0&(a+a^{-1})b^{q+1}\\
0&\lambda+1&0\\
a+a^{-1}&0&\lambda+ab+a^{-1}b^q
\end{vmatrix}=(\lambda+a)(\lambda+a^{-1})(\lambda+1),
\end{eqnarray*}
we see that the eigenvalues of $ZY$ are $a$, $a^{-1}$ and $1$, whence $o(zy)=q+1$.
\end{proof}

\begin{lemma}\label{lem7}
Let $H$ be a maximal subgroup of $\SU_3(q)$ with $q>8$ even. If $H^\eta$ contains a dihedral group of order $2(q+1)$, then either $H$ is reducible or $H=\C_{q+1}^2\rtimes\Sy_3$.
\end{lemma}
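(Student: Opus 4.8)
The plan is to run through the list of maximal subgroups of $\SU_3(q)$ given in \cite[Tables~8.5--8.6]{BHR2013}, exactly as in the proof of Lemma~\ref{lem2}, and to eliminate every irreducible type except $\C_{q+1}^2\rtimes\Sy_3$. The first step is to record which families survive when $q=2^f$ with $f>1$. Several of the families occurring for odd $q$ in Lemma~\ref{lem2}---namely the subgroup $\SO_3(q)\times\C_{\gcd(q+1,3)}$, the extraspecial normalizer $3_+^{1+2}\rtimes\Q_8.\C_{\gcd(q+1,9)/3}$, and the almost simple candidates $\PSL_2(7)$ and $\C_3.\A_6$---require $q=p$ to be prime, and so are absent once $q=2^f>8$. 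Hence, assuming $H$ irreducible, $H$ is one of: (i) $\C_{q+1}^2\rtimes\Sy_3$; (ii) $\C_{q^2-q+1}\rtimes\C_3$; or (iii) the subfield subgroup $\SU_3(q_0).\C_{\gcd((q+1)/(q_0+1),3)}$ with $q=q_0^r$ and $r$ an odd prime. It then remains to rule out (ii) and (iii).

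Case (ii) is immediate: since $q$ is even, each of $q^2-q+1$, $3$, and $\gcd(q+1,3)$ is odd, so $|H|$ is odd. Thus $H^\eta$ has odd order and contains no involution, and in particular no dihedral group of order $2(q+1)$.

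Case (iii) is the heart of the argument, and it is here that the crude divisibility estimates of Lemma~\ref{lem2} fail: when $r=3$ one has $q+1=q_0^3+1\mid|\SU_3(q_0)|$, so $2(q+1)$ divides $|H|$ and no contradiction follows from $|H|$ alone. Instead I would argue with element orders. A dihedral group of order $2(q+1)$ contains a cyclic subgroup of order $q+1$, so it suffices to prove that $H^\eta$ has no element of order $q+1$. Every diagonal element normalizing $\SU_3(q_0)$ lies, modulo scalars, in $\PGU_3(q_0)$, so $H^\eta\leq\PGU_3(q_0)$. From the maximal tori of $\GU_3(q_0)$, of orders $(q_0+1)^3$, $(q_0+1)(q_0^2-1)$, and $q_0^3+1$, one checks that the semisimple elements of $\PGU_3(q_0)$ have order dividing $q_0^2-1$ or $q_0^2-q_0+1$; combined with the bound of $4$ on unipotent orders in characteristic $2$, every element of $\PGU_3(q_0)$ then has order less than $q+1=q_0^r+1$ for $q>8$. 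Hence $H^\eta$ has no element of order $q+1$, a contradiction.

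Combining the two eliminations, an irreducible $H$ must equal $\C_{q+1}^2\rtimes\Sy_3$, giving the assertion. I expect case (iii) to be the main obstacle, precisely because of the Coxeter torus: $\GU_3(q_0)$ genuinely contains elements of order $q_0^3+1$, which equals $q+1$ when $r=3$, and the argument hinges on the fact that such elements have nontrivial determinant and so collapse to order $q_0^2-q_0+1$ in $\SU_3(q_0)$ (equivalently in $\PGU_3(q_0)$). The remaining care is in confirming, from \cite{BHR2013}, that for even $q>8$ the tables contain no further irreducible family capable of accommodating a cyclic subgroup of order $q+1$.
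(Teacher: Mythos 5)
Your proposal is correct, and while it follows the same skeleton as the paper (reduce via \cite[Tables~8.5--8.6]{BHR2013} to the same three irreducible candidates, then eliminate (ii) and (iii)), your treatment of the two eliminations differs in a substantive way. For (ii) you observe that $|H|=3(q^2-q+1)$ is odd for even $q$, so $H^\eta$ has no involutions at all; the paper instead notes $2(q+1)\nmid 3(q^2-q+1)$ --- both work, yours is marginally cleaner. The genuine divergence is in (iii): the paper splits into $r=3$ (handled by citing the tables again to assert $H$ has no dihedral subgroup of order $2(q_0^3+1)$), $r=5,7$ (explicit congruences showing $q_0^r+1\nmid|H|$), and $r\geq11$ (a size comparison), whereas you give a uniform, self-contained argument via element orders: $H^\eta\leq\PGU_3(q_0)$, the images of the three maximal tori of $\GU_3(q_0)$ (of orders $(q_0+1)^3$, $(q_0+1)(q_0^2-1)$, $q_0^3+1$) have exponents dividing $q_0+1$, $q_0^2-1$, $q_0^2-q_0+1$ respectively, and unipotent elements in characteristic $2$ have order at most $4$, so every element of $\PGU_3(q_0)$ has order at most $4(q_0^2-1)$, which is less than $q_0^r+1$ for all relevant $(q_0,r)$ (including the boundary case $q_0=4$, $r=3$, where $60<65$, and $q_0=2$, $r\geq5$, where $12<33$). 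This correctly isolates and resolves the one delicate point, namely that for $r=3$ the order $q+1=q_0^3+1$ \emph{does} divide $|H|$, so no divisibility argument can succeed there: the Coxeter torus of $\GU_3(q_0)$ contains the centre $\C_{q_0+1}$ and collapses to order $q_0^2-q_0+1$ modulo scalars. Your version is arguably preferable, as it avoids the paper's somewhat opaque table citation for $r=3$ and handles all $r$ at once. One minor inaccuracy worth correcting: $\SO_3(q)\times\C_{\gcd(q+1,3)}$ is excluded for even $q$ because it requires $q$ odd, not because it requires $q=p$ prime (it occurs for all odd prime powers); this does not affect your conclusion.
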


\begin{proof}
By \cite[Tables~8.5--8.6]{BHR2013}, either $H$ is reducible or $H$ is one of the following groups:
\begin{itemize}
\item[(i)] $\C_{q+1}^2\rtimes\Sy_3$;
\item[(ii)] $\C_{q^2-q+1}\rtimes\C_3$;
\item[(iii)] $\SU_3(q_0).\C_{\gcd((q+1)/(q_0+1),3)}$, where $q=q_0^r$ with $r$ odd prime.
\end{itemize}
Suppose that $H^\eta$ contains a dihedral group of order $2(q+1)$.

If $H$ is as in (ii), then $3(q^2-q+1)$ is divisible by $2(q+1)$, which is not possible.

Assume that $H$ is as in (iii). For $r=3$, we know that $H$ has no dihedral subgroup of order $2(q+1)=2(q_0^3+1)$ according to \cite[Tables~8.5--8.6]{BHR2013}, a contradiction. For $r=5$ or $7$, direct analysis shows that $q+1=q_0^r+1$ does not divide $|H|$ since
\[
\begin{cases}
  |H|\equiv-\gcd((q_0^5+1)/(q_0+1),3)(2q_0^3-q_0+1)\pmod{q_0^5+1} &\text{ for }r=5,\\
  |H|\equiv-\gcd((q_0^7+1)/(q_0+1),3)(q_0^6-q_0^5+q_0^3+q_0)\pmod{q_0^7+1} &\text{ for }r=7,
\end{cases}
\]
a contradiction. For $r\geq 11$, we have $2(q+1)>|H|$, again a contradiction.
\end{proof}

\begin{lemma}\label{lem9}
In the notation of Construction~$\ref{cons2}$, $\langle X,Y,Z\rangle$ is an irreducible subgroup of $\SU_3(q)$.
\end{lemma}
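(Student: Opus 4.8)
The plan is to mirror the strategy of Lemma~\ref{lem3}: assume for contradiction that $\langle X,Y,Z\rangle$ is reducible, so that it stabilizes a subspace of $\bbF_{q^2}^3$ of dimension $1$ or $2$, and then derive a contradiction by successively forcing the coordinates of a fixed vector to vanish. The choice of generators is engineered so that each one isolates a single coordinate, and the only structural input special to even characteristic is the observation that $a+a^{-1}\neq0$.

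For the dimension-$1$ case, suppose the stabilized line is $\langle u\rangle$ with $u=(u_1,u_2,u_3)$. First I would apply $Z$: since $Z$ sends $u$ to $(u_1,u_2,u_1+u_3)$, the condition $uZ\in\langle u\rangle$ forces $u_1=0$. Next, applying $Y$ to $u=(0,u_2,u_3)$ produces a vector whose first coordinate is $u_3(a+a^{-1})$, and since vectors in $\langle u\rangle$ have zero first coordinate this yields $u_3(a+a^{-1})=0$. Here I would invoke the fact that $a+a^{-1}\neq0$ in characteristic $2$, which holds because $a\neq1$ (as $a$ has order $q+1\geq5$), to conclude $u_3=0$. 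Finally, applying $X$ to $u=(0,u_2,0)$ gives a vector with first coordinate $u_2b^q$, and since $b\neq0$ this forces $u_2=0$, so $u=0$, a contradiction.

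The dimension-$2$ case I would treat by duality: if $\langle X,Y,Z\rangle$ stabilizes a $2$-dimensional subspace, then $\langle X^\T,Y^\T,Z^\T\rangle$ stabilizes a line $\langle v\rangle$ with $v=(v_1,v_2,v_3)$. The same three-step elimination applies in the order $Z^\T,Y^\T,X^\T$: the relation $vZ^\T\in\langle v\rangle$ gives $v_3=0$; then $vY^\T\in\langle v\rangle$ forces its third coordinate $v_1(a+a^{-1})$ to vanish, so $v_1=0$ (again using $a+a^{-1}\neq0$); and finally $vX^\T\in\langle v\rangle$ forces $v_2=0$, yielding $v=0$, the desired contradiction.

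The computations are all elementary matrix-vector products, so no step presents a genuine obstacle; the only point requiring care is the order in which the three generators are applied, chosen so that each application isolates a single nonzero coordinate. The one conceptual input specific to this even-characteristic setting is the identity $a+a^{-1}\neq0$, which takes the place of the conditions $a\neq1$ and $a^2\neq1$ used in Lemma~\ref{lem3}; in characteristic $2$ these collapse to the single requirement that $a\neq1$.
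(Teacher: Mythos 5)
Your proposal is correct and follows essentially the same argument as the paper: the same elimination order ($Z$ then $Y$ then $X$ for a stabilized line, and $Z^\T$, $Y^\T$, $X^\T$ for the dual line arising from a stabilized plane), with the same key input $a+a^{-1}\neq0$, which you justify correctly from $a\neq1$ in characteristic~$2$. The only cosmetic difference is that the paper does not spell out this justification of $a+a^{-1}\neq0$, whereas you do.
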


\begin{proof}
Suppose for a contradiction that $\langle X,Y,Z\rangle$ is reducible. It follows that $\langle X,Y,Z\rangle$ stabilizes a subspace of $\bbF_{q^2}^3$ of dimension $1$ or $2$.

First assume that $\langle X,Y,Z\rangle$ stabilizes a subspace of $\bbF_{q^2}^3$ of dimension $1$, say $\langle u\rangle$, where $u=(u_1,u_2,u_3)$ with $u_1,u_2,u_3\in\bbF_{q^2}$. We have $uZ\in\langle u\rangle$, that is, $(u_1,u_2,u_1+u_3)\in\langle(u_1,u_2,u_3)\rangle$. This implies that $u_1=0$.
Note that $uY=((a+a^{-1})u_3,u_2,(ab+a^{-1}b^q)u_3)\in\langle(0,u_2,u_3)\rangle$. In view of $a+a^{-1}\neq 0$, we obtain $u_3=0$.
Moreover, we derive from $uX=(b^{q}u_2,0,u_2)\in\langle(0,u_2,0)\rangle$ that $u_2=0$. Thus $u=0$, a contradiction.

Next assume that $\langle X,Y,Z\rangle$ stabilizes a subspace of $\bbF_{q^2}^3$ of dimension $2$. It follows that $\langle X^\T,Y^\T,Z^\T\rangle$ stabilizes a subspace of $\bbF_{q^2}^3$ of dimension $1$, say $\langle v\rangle$, where $v=(v_1,v_2,v_3)$ with $v_1,v_2,v_3\in\bbF_{q^2}$.
From $vZ^\T=(v_1+v_3,v_2,v_3)\in\langle(v_1,v_2,v_3)\rangle$, we have $v_3=0$.
Note that $vY^\T=((ab+a^{-1}b^q)v_1,v_2,(a+a^{-1})v_1)\in\langle(v_1,v_2,0)\rangle$. Since $a+a^{-1}\neq0$, we obtain $v_1=0$.
Moreover, we see from $vX^\T=(v_2,0,bv_2)\in\langle(0,v_2,0)\rangle$ that $v_2=0$. Hence $v=0$, again a contradiction.
\end{proof}

\begin{lemma}\label{lem8}
In the notation of Construction~$\ref{cons2}$, $S$ is a generating set of $\PSU_3(q)$.
\end{lemma}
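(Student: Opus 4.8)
Following the strategy of Lemma~\ref{lem4}, the plan is to reduce the statement to the equality $\langle X,Y,Z\rangle=\SU_3(q)$. For the two small cases $q\in\{4,8\}$, which are not covered by Lemma~\ref{lem7}, I would confirm this equality by direct computation in \magma~\cite{magma}. For $q>8$ I would argue by contradiction: assuming $\langle X,Y,Z\rangle\neq\SU_3(q)$, the subgroup lies in some maximal subgroup $H$ of $\SU_3(q)$. By Lemma~\ref{lem6} the subgroup $\langle y,z\rangle$ is dihedral of order $2(q+1)$, so $H^\eta$ contains a dihedral group of order $2(q+1)$; combining Lemma~\ref{lem7} with the irreducibility supplied by Lemma~\ref{lem9} (which excludes the reducible alternative) then forces $H=\C_{q+1}^2\rtimes\Sy_3$.

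The heart of the argument is to use that such an $H$ is the normalizer of a maximal torus. Let $T$ be the normal subgroup $\C_{q+1}^2$ of $H$, so that $H/T\cong\Sy_3$. I would first show $zy\in T$. Since $q+1$ is odd and $o(zy)=q+1$, the image of $zy$ in $H/T$ cannot be a transposition; and it cannot be a $3$-cycle either, for a monomial matrix supporting a $3$-cycle has all its eigenvalues equal to cube roots of its determinant, whereas the eigenvalue $a$ of $zy$, having order $q+1$, is not a cube root of unity. Hence $zy\in T$. As $zy$ is regular semisimple, $\Cen_{\SU_3(q)}(zy)$ is a maximal torus of order $(q+1)^2$ containing $T$, so $T=\Cen_{\SU_3(q)}(zy)$, and since $T$ is normal in $H$ with $|\Nor_{\SU_3(q)}(T)|=6(q+1)^2=|H|$ we obtain $H=\Nor_{\SU_3(q)}(T)$. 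Consequently membership of $X$ in $H$ forces $X$ to permute the three eigenlines of $zy$.

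It then remains to contradict this permutation condition, and this is where the hypotheses on $b$ enter. The eigenline of $zy$ for the eigenvalue $1$ is $\langle(0,1,0)\rangle$, and its image under $X$ is the second row of $X$, namely $\langle(b^q,0,1)\rangle$. As this vector lies in $\langle(1,0,0),(0,0,1)\rangle$, it is distinct from $\langle(0,1,0)\rangle$, so it would have to be the eigenline of $zy$ for $a$ or for $a^{-1}$; equivalently, $(b^q,0,1)$ would be an eigenvector of $ZY$. Writing out the equation $vZY=\lambda v$ for $v=(b^q,0,1)$ and eliminating $\lambda$ between the first and third coordinates, I expect the resulting identity to reduce, after clearing $b^q$ and cancelling the nonzero factor $a+a^{-1}$ (nonzero since $o(a)=q+1>2$), to
\[
(b^{2q}+1)(b^{q+1}+1)=0.
\]
This forces $b^{q+1}=1$ or $b^{2q}=1$; the former contradicts $b^{q+1}\neq1$, while the latter gives $b^q=1$, hence $b=1$ and $b+b^q=0\neq1$, contrary to the hypotheses of Construction~\ref{cons2}.

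The step I expect to be the main obstacle is the group-theoretic reduction in the second paragraph, namely pinning down $zy\in T$ and identifying $H$ with $\Nor_{\SU_3(q)}(T)$, so that belonging to $H$ becomes exactly the condition of permuting the eigenlines of the regular element $zy$. Once this is in place, the final contradiction is just the short eigenvector computation, which is routine but must be carried out precisely to verify that the factor $b^{q+1}+1$ genuinely appears.
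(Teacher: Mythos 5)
Your proposal is correct, and after the shared reduction (the \magma check for $q\in\{4,8\}$, and Lemmas~\ref{lem6}, \ref{lem7}, \ref{lem9} forcing $H=\C_{q+1}^2\rtimes\Sy_3$ for $q>8$) it genuinely diverges from the paper's argument. The paper works combinatorially with the invariant decomposition $V=V_1\oplus V_2\oplus V_3$: it parametrizes the three lines using the action of $Z$, rules out $k_5=0$ by a case analysis of how $X$ permutes the lines (using $b^{q+1}\neq 1$), and then derives $a+a^{-1}+1=0$ from the action of $Y$, contradicting~\eqref{ec4}. You instead first place $ZY$ in the base group $T=\C_{q+1}^2$ (its image in $H/T\cong\Sy_3$ has odd order since $o(ZY)=q+1$, and cannot be a $3$-cycle because then every eigenvalue of $ZY$ would be a cube root of $\det(ZY)=1$, whereas $a$ has order $q+1\geq 5$), so the decomposition lines are precisely the eigenlines of $ZY$, and $X\in H$ must permute them; your single eigenvector computation then checks out: for $v=(b^q,0,1)=(0,1,0)X$, eliminating $\lambda$ from $vZY=\lambda v$ gives $(a+a^{-1})\left(b^{3q+1}+b^{2q}+b^{q+1}+1\right)=0$, i.e.\ $(b^{2q}+1)(b^{q+1}+1)=0$, and both factors are excluded by the hypotheses on $b$. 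Two observations. First, your detour through $T=\Cen_{\SU_3(q)}(ZY)$ and $H=\Nor_{\SU_3(q)}(T)$ is superfluous: once $ZY\in T$ is known, each $V_i$ is $ZY$-invariant and hence an eigenline (the eigenvalues $a,a^{-1},1$ being distinct), which is all the argument needs. Second, your route consumes only $b+b^q=1$, $b^{q+1}\neq1$ and $a+a^{-1}\neq0$ (automatic from $o(a)=q+1>2$), never invoking~\eqref{ec4}; since the paper uses~\eqref{ec4} only in this lemma, your proof shows that condition could be dropped from Lemma~\ref{exi2} altogether. What each approach buys: yours is shorter, replaces the paper's multi-case analysis by one determinant-plus-eigenvector computation, and weakens the hypotheses; the paper's is more elementary and self-contained, never needing the identification of $T$ as the kernel of the action on the lines or the cube-root-of-determinant argument.
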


\begin{proof}
It suffices to prove $\langle X,Y,Z\rangle=\SU_3(q)$. For $q=4$ or 8, computation in \magma~\cite{magma} directly verifies the conclusion. Now suppose that $q>8$ and $\langle X,Y,Z\rangle\neq\SU_3(q)$. By Lemma~\ref{lem6}, we see that $\langle y,z\rangle$ is a dihedral group of order $2(q+1)$. According to Lemma~\ref{lem7} and Lemma~\ref{lem9}, we have $\langle X,Y,Z\rangle=\C_{q+1}^2\rtimes\Sy_3$.
This implies that $\langle X,Y,Z\rangle$ stabilizes a direct sum decomposition of $V=V_1\oplus V_2\oplus V_3$ such that each $V_i$ is a non-singular subspace of dimension $1$.

Note that $X$, $Y$ and $Z$ are all involutions, which means that each of them stabilizes at least one of the three subspaces.
For a given vector $(u_1,u_2,u_3)$ with $u_1,u_2,u_3\in\bbF_{q^2}$, we have
\[
(u_1,u_2,u_3)Z=(u_1,u_2,u_1+u_3).
\]
It is easy to see that $Z$ stabilizes $\langle(u_1,u_2,u_3)\rangle$ if and only if $u_1=0$. Thus, for the three subspaces of the decomposition, $Z$ swaps two of them, suppose $V_1$ and $V_2$, and stabilizes the third one, $V_3$. Let $V_1=\langle (1,k_2,k_3) \rangle$, $V_2=\langle (1,k_2,1+k_3)\rangle$ and $V_3=\langle (0,k_4,k_5) \rangle$ with $k_2,k_3,k_4,k_5 \in\bbF_{q^2}$. It is clear that $k_4\neq 0$, otherwise $(V_1\oplus V_2)\cap V_3\neq\{0\}$.

We claim that $k_5\neq0$. Suppose for a contradiction that $k_5=0$.

Assume $V_3X=V_3$. Hence
\[
(b^qk_4, 0, k_4)\in\langle(0, k_4, 0)\rangle.
\]
Thus we have $k_4=0$, a contradiction.

Assume that $X$ stabilizes $V_1$, and swaps $V_2$ and $V_3$. Hence
\begin{numcases}{}
(b+b^qk_2+b^{q+1}k_3, 1+bk_3, 1+k_2+b^qk_3)\in\langle(1, k_2, k_3)\rangle,  \label{dsd1}\\
(b+b^qk_2+b^{q+1}(1+k_3), 1+b(1+k_3), 1+k_2+b^q(1+k_3))\in\langle(0, k_4, 0)\rangle.  \label{dsd2}
\end{numcases}
It follows from \eqref{dsd2} that
\[
\begin{cases}
b+b^qk_2+b^{q+1}(1+k_3)=0,\\
1+k_2+b^q(1+k_3)=0.
\end{cases}
\]
This implies that $k_2=0$ and
\begin{equation}\label{dsd3}
1+b^q(1+k_3)=0.
\end{equation}
Next we see from \eqref{dsd1} that $1+bk_3=0$. This together with \eqref{dsd3} implies that $k_3=b^{q}$, and so $b^{q+1}=1$, a contradiction.

Assume that $X$ stabilizes $V_2$, and swaps $V_1$ and $V_3$. Hence
\begin{numcases}{}
(b+b^qk_2+b^{q+1}k_3, 1+bk_3, 1+k_2+b^qk_3)\in\langle(0, k_4, 0)\rangle, \label{dsd4}\\
(b+b^qk_2+b^{q+1}(1+k_3), 1+b(1+k_3), 1+k_2+b^q(1+k_3))\in\langle(1, k_2, 1+k_3)\rangle. \label{dsd5}
\end{numcases}
It follows from \eqref{dsd4} that
\[
\begin{cases}
b+b^qk_2+b^{q+1}k_3=0,\\
1+k_2+b^qk_3=0.
\end{cases}
\]
This implies that $k_2=0$ and
\begin{equation}\label{dsd6}
1+b^qk_3=0.
\end{equation}
Next we see from \eqref{dsd5} that $1+b(1+k_3)=0$. This together with \eqref{dsd6} implies that $k_3=b$, and so $b^{q+1}=1$, again a contradiction.

Thus we conclude that $k_5\neq 0$.

Since $a+a^{-1}\neq0$, we have
\[ ((a+a^{-1})k_5, k_4, (ab+a^{-1}b^q)k_5)\notin\langle(0, k_4, k_5)\rangle.\]
This implies that $V_3Y\neq V_3$. Thus $V_3Y=V_1 \text{ or } V_2$.

If $V_3Y=V_1$ and $V_2Y=V_2$, then
\begin{numcases}{}
((a+a^{-1})k_5, k_4, (ab+a^{-1}b^q)k_5) \in\langle(1, k_2, k_3)\rangle,\label{dsd7}\\
(ab^q+a^{-1}b+(a+a^{-1})k_3, k_2, (a+a^{-1})b^{q+1}+(ab+a^{-1}b^q)k_3)\label{dsd8}
\in\langle(1, k_2, 1+k_3)\rangle.
\end{numcases}
It follows from \eqref{dsd7} that $k_2\neq 0$ and
\begin{equation}\label{dsd9}
(a+a^{-1})k_3=ab+a^{-1}b^q.
\end{equation}
Next we see from \eqref{dsd8} that $ab^q+a^{-1}b+(a+a^{-1})k_3=1$.
Combining this with \eqref{dsd9}, we have $a+a^{-1}+1=0$, which contradicts \eqref{ec4}.

If $V_3Y=V_2$ and $V_1Y=V_1$, then
\[
\begin{cases}
((a+a^{-1})k_5, k_4, (ab+a^{-1}b^q)k_5) \in\langle(1, k_2, 1+k_3)\rangle,\\
(ab+a^{-1}b^q+(a+a^{-1})k_3, k_2, a(b+b^{q+1})+a^{-1}(b^q+b^{q+1})+(ab+a^{-1}b^q)k_3)
\in\langle(1, k_2, k_3)\rangle.
\end{cases}
\]
Similarly, we have
\[
\begin{cases}
(a+a^{-1})(1+k_3)=ab+a^{-1}b^q,\\
ab+a^{-1}b^q+(a+a^{-1})k_3=1.
\end{cases}
\]
This implies that $a+a^{-1}+1=0$, again contradicting \eqref{ec4}. This completes the proof.
\end{proof}

\begin{lemma}\label{lem10}
In the notation of Construction~$\ref{cons1}$, $\Aut(\PSU_3(q),S)=1$.
\end{lemma}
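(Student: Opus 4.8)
The plan is to follow the strategy of Lemma~\ref{lem5}, using the characteristic polynomials of the three pairwise products of the involutions $x,y,z$ as invariants that any $\alpha\in\Aut(\PSU_3(q),S)$ must preserve up to a scalar and a field twist. Since $x,y,z$ are involutions, for any product $M$ of two of them one has $M^{-1}$ conjugate to $M$, so $\mathrm{tr}(M^{-1})=\mathrm{tr}(M)$ and the characteristic polynomial of $M$ is $\lambda^3-c\lambda^2+c\lambda-1$ with $c=\mathrm{tr}(M)$. I would first compute the three traces: $\mathrm{tr}(ZY)=a+a^{-1}+1$ (available from Lemma~\ref{lem6}), $\mathrm{tr}(XZ)=1+b^{q+1}=1+b+b^2$ (using $b^q=1+b$), and $\mathrm{tr}(XY)=a(1+b+b^3+b^4)+a^{-1}(b^2+b^3+b^4)$. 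These are exactly the quantities appearing in \eqref{ec1}--\eqref{ec3}.

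Let $\alpha\in\Aut(\PSU_3(q),S)$. It permutes $S$ with order $1$, $2$ or $3$, and since $\Aut(\PSU_3(q))/\PGU_3(q)=\langle\phi\PGU_3(q)\rangle$ is cyclic of order $2f$, we have $\alpha\PGU_3(q)=\phi^{i}\PGU_3(q)$ for some $i\in I$. The heart of the argument is to show that $\alpha$ fixes $y$, which amounts to ruling out $(xz)^\alpha\in\{yz,zy,xy,yx\}$, the four products arising when $\alpha$ moves $y$. If $(xz)^\alpha=N$ for such an $N$, then lifting to $\SU_3(q)$ gives $sD^{-1}(XZ)^{\phi^{i}}D=N$ for some $D\in\GU_3(q)$ and $s\in\bbF_{q^2}^\times$, so $s(XZ)^{\phi^{i}}$ and $N$ share a characteristic polynomial. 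Comparing the two middle coefficients forces $s=1$ (the trace $(1+b+b^2)^{2^{i}}$ is nonzero because $b^{q+1}\neq1$, excluding the degenerate polynomial $\lambda^3-1$), and the resulting identity $(1+b+b^2)^{2^{i}}=\mathrm{tr}(N)$ then contradicts \eqref{ec1} when $N\in\{yz,zy\}$ and contradicts \eqref{ec2} when $N\in\{xy,yx\}$. Hence $\alpha$ fixes $y$, so $\alpha\in\{1,(x\,z)\}$.

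It remains to exclude the transposition swapping $x$ and $z$. Such an $\alpha$ has order $2$, whence $i\in\{0,f\}$ and $(zy)^\alpha=xy$. Running the same comparison with $ZY$ in place of $XZ$ yields $(a+a^{-1}+1)^{2^{i}}=\mathrm{tr}(XY)$, where now $s=1$ is forced by \eqref{ec4} (which gives $a+a^{-1}+1\neq0$). Since $a+a^{-1}+1\in\bbF_q$ is fixed by the $2^f$-power map, we get $(a+a^{-1}+1)^{2^{i}}=a+a^{-1}+1$ for $i\in\{0,f\}$, so the identity becomes $a+a^{-1}+1=\mathrm{tr}(XY)$, contradicting \eqref{ec3}. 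Thus $\alpha=1$, and since $\langle S\rangle=\PSU_3(q)$ by Lemma~\ref{lem8}, we conclude $\Aut(\PSU_3(q),S)=1$.

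The main obstacle I anticipate is the trace computation for $XY$ combined with the uniform handling of the projective scalar $s$. Unlike the odd case, $a$ has order $q+1$ and need not lie in $\bbF_q$, so before collapsing the field twist $2^{i}$ I must check which traces are $\bbF_q$-rational; the facts that $a+a^{-1}+1\in\bbF_q$ and $1+b+b^2\in\bbF_q$ (both following from $a^q=a^{-1}$ and $b^q=1+b$) are precisely what make \eqref{ec1}--\eqref{ec4} suffice for every admissible $i\in I$. The remaining work is the bookkeeping of which product each $(xz)^\alpha$ can equal and matching it to the correct condition, exactly as in Lemma~\ref{lem5}.
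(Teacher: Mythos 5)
Your proposal is correct and follows essentially the same route as the paper's proof: compare the (palindromic) characteristic polynomials of the pairwise products under a field twist $\phi^i$, first rule out $(xz)^\alpha\in\{xy,yx,yz,zy\}$ via \eqref{ec1}--\eqref{ec2} to force $\alpha$ to fix $y$, then kill the $x$--$z$ swap via \eqref{ec3} using the $\bbF_q$-rationality of $a+a^{-1}+1$. Your treatment is in fact slightly more careful than the paper's in two spots it leaves implicit: justifying the symmetric form $\lambda^3-c\lambda^2+c\lambda-1$ by the conjugacy $XMX^{-1}=M^{-1}$ for a product $M$ of two involutions, and pinning down the projective scalar $s=1$ from the two middle coefficients using $1+b+b^2=1+b^{q+1}\neq0$ and \eqref{ec4}.
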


\begin{proof}
Let $\alpha$ be an arbitrary element of $\Aut(\PSU_3(q),S)$. We aim to show that $\alpha$ fixes every element of $S$. Since Lemma~\ref{lem8} asserts that $S$ generates $\PSU_3(q)$, this will lead to $\alpha=1$ and hence $\Aut(\PSU_3(q),S)=1$.

Recall that the characteristic polynomial of $ZY$ is
\[
\lambda^3+(a+a^{-1}+1)\lambda^2+(a+a^{-1}+1)\lambda+1.
\]
Direct calculation also shows that the characteristic polynomials of $ZX$ and $XY$ are
\[
\lambda^3+(1+b+b^2)\lambda^2+(1+b+b^2)\lambda+1
\]
and
\[
\lambda^3+\left( a(1+b+b^3+b^4)+a^{-1}(b^2+b^3+b^4) \right)\lambda^2+\left( a(1+b+b^3+b^4)+a^{-1}(b^2+b^3+b^4) \right)\lambda+1
\]
respectively.

Since the order of $\alpha$ is $1$, $2$ or $3$, we deduce that $\alpha\PGU_3(q)$ has order $1$, $2$ or $3$ in
\[
\Aut(\PSU_3(q))/\PGU_3(q)=\langle\phi\PGU_3(q)\rangle,
\]
where $\phi$ is the field automorphism of order $2f$.
Hence $\alpha\PGU_3(q)=\phi^{i}\PGU_3(q)$ for some $i\in\left\{f, 2f, 2f/\gcd(3,f), 4f/\gcd(3,f)\right\}$. By abuse of notation we also denote by $\phi$ the corresponding automorphism of $\SU_3(q)$ mapping each entry of a matrix to its $2$nd power.

We first confirm that $\alpha$ fixes $y$ by showing that $(zx)^\alpha\notin\{xy,yx,yz,zy\}$.

(i) Suppose that $(zx)^\alpha=xy$ (or $yx$). It follows that
\[
sD^{-1}(ZX)^{\phi^{i}}D=XY
\]
for some $s\in\bbF_{q^2}^\times$ and $D\in\GU_3(q)$. Hence $s(ZX)^{\phi^{i}}=D(XY)D^{-1}$ has the same characteristic polynomial as $XY$, that is,
\begin{align*}
&\lambda^3+s(1+b+b^2)^{2^{i}}\lambda^2+s^2(1+b+b^2)^{2^{i}}\lambda+s^3\\
=&\lambda^3+\left( a(1+b+b^3+b^4)+a^{-1}(b^2+b^3+b^4) \right)\lambda^2\\
&+\left( a(1+b+b^3+b^4)+a^{-1}(b^2+b^3+b^4) \right)\lambda+1.
\end{align*}
Thereby we deduce that
\[
(1+b+b^2)^{2^{i}}=a(1+b+b^3+b^4)+a^{-1}(b^2+b^3+b^4),
\]
contradicting \eqref{ec2}.

(ii) Suppose that $(zx)^\alpha=yz$ (or $zy$). Similarly, we have
\begin{align*}
&\lambda^3+s(1+b+b^2)^{2^{i}}\lambda^2+s^2(1+b+b^2)^{2^{i}}\lambda+t^3\\
=&\lambda^3+(a+a^{-1}+1)\lambda^2+(a+a^{-1}+1)\lambda+1.
\end{align*}
for some $t\in\bbF_{q^2}^\times$. Hence
\[
(1+b+b^2)^{2^{i}}=a+a^{-1}+1,
\]
which contradicts \eqref{ec1}.

To finish the proof, we only need to show that $\alpha$ cannot swap $x$ and $z$.

Suppose for a contradiction that $\alpha$ swaps $x$ and $z$. It implies that $\alpha$ has order $2$, and so $\alpha\PGU_3(q)=\phi^{i}\PGU_3(q)$ for some $i\in\left\{f,2f\right\}$. As $(zy)^\alpha=z^\alpha y^\alpha=xy$, it follows that
\begin{align*}
&\lambda^3+r(a+a^{-1}+1)^{2^{i}}\lambda^2+r^2(a+a^{-1}+1)^{2^{i}}\lambda+r^3\\
=&\lambda^3+\left( a(1+b+b^3+b^4)+a^{-1}(b^2+b^3+b^4) \right)\lambda^2\\
&+\left( a(1+b+b^3+b^4)+a^{-1}(b^2+b^3+b^4) \right)\lambda
+1.
\end{align*}
for some $r\in\bbF_{q^2}^\times$. Thereby we deduce that
\[
(a+a^{-1}+1)^{2^{i}}=a(1+b+b^3+b^4)+a^{-1}(b^2+b^3+b^4),
\]
which is equivalent to
\[
a+a^{-1}+1=a(1+b+b^3+b^4)+a^{-1}(b^2+b^3+b^4),
\]
contradicting \eqref{ec3}.

This shows that $\alpha$ fixes every element of $S$, as desired.
\end{proof}

\begin{theorem}\label{thm3}
For every $2$-power $q\geq4$, the graph $\Gamma(q)$ as in Construction~$\ref{cons1}$ is a cubic GRR of $\PSU_3(q)$.
\end{theorem}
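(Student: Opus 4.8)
The plan is to mirror the argument already used for the odd-characteristic case in Theorem~\ref{thm2}, since the even case has been set up with a fully parallel chain of lemmas. The key tool is \cite[Theorem~1.3]{FLWX2002}, which reduces the GRR property for a connection set of size three to two verifiable conditions: that $S$ generates the group, and that $\Aut(\PSU_3(q),S)$ is trivial. First I would record that the relevant connection set is $S=\{x,y,z\}$ from Construction~\ref{cons2}; by Lemma~\ref{lem6} each of $x$, $y$, $z$ is an involution, so $1\notin S$ and $S^{-1}=S$, confirming that $\Gamma(q)=\Cay(\PSU_3(q),S)$ is a well-defined Cayley graph of valency three.

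Next I would supply the two hypotheses required by the criterion. Lemma~\ref{lem8} establishes that $\langle S\rangle=\PSU_3(q)$, equivalently $\langle X,Y,Z\rangle=\SU_3(q)$; and Lemma~\ref{lem10} establishes that $\Aut(\PSU_3(q),S)=1$. With both conditions in hand, \cite[Theorem~1.3]{FLWX2002} immediately yields that $\Cay(\PSU_3(q),S)$ is a GRR, and since $|S|=3$ it is a cubic GRR. The assembly step is therefore purely a matter of citing the criterion and quoting the two lemmas, exactly as in the proof of Theorem~\ref{thm2}.

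The genuine difficulty lies entirely in the supporting lemmas, which I would treat as already established. The hardest part is Lemma~\ref{lem8}, whose proof must rule out that $\langle X,Y,Z\rangle$ lies in the maximal subgroup $\C_{q+1}^2\rtimes\Sy_3$ surviving the reduction of Lemma~\ref{lem7}; this demands a delicate case analysis of how the involutions $X$, $Y$, $Z$ permute a putative non-singular decomposition $V=V_1\oplus V_2\oplus V_3$, the various cases collapsing into a contradiction with either $b^{q+1}\neq1$ or condition~\eqref{ec4}. The triviality of $\Aut(\PSU_3(q),S)$ in Lemma~\ref{lem10} in turn rests on comparing the characteristic polynomials of the products $ZY$, $ZX$, $XY$ under the Frobenius twists $\phi^{i}$, where conditions~\eqref{ec1}--\eqref{ec3} are engineered precisely so that no nontrivial $\alpha$ can map $(zx)$ to $xy$, $yx$, $yz$, or $zy$, nor swap $x$ and $z$. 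Once these facts are available, Theorem~\ref{thm3} follows by direct citation with no further computation.
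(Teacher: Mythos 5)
Your proposal is correct and matches the paper's own proof essentially verbatim: both invoke \cite[Theorem~1.3]{FLWX2002} to reduce the GRR property to the two conditions $\langle S\rangle=\PSU_3(q)$ and $\Aut(\PSU_3(q),S)=1$, which are supplied by Lemma~\ref{lem8} and Lemma~\ref{lem10} respectively (your additional appeal to Lemma~\ref{lem6} to confirm the connection set consists of involutions is a harmless extra check). You also correctly read the statement's reference to Construction~\ref{cons1} as a typo for Construction~\ref{cons2}, which is what the paper's proof uses as well.
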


\begin{proof}
By~\cite[Theorem~1.3]{FLWX2002}, for any connection set $S$ of size three, the graph $\Cay(\PSU_3(q),S)$ is a GRR of $\PSU_3(q)$ if and only if $\langle S\rangle=\PSU_3(q)$ and $\Aut(\PSU_3(q),S)=1$. Now let $S$ be the set in Construction~\ref{cons2}. Lemma~\ref{lem8} shows that $\langle S\rangle=\PSU_3(q)$, and Lemma~\ref{lem10} shows that $\Aut(\PSU_3(q),S)=1$. Hence $\Gamma(q)=\Cay(\PSU_3(q),S)$ is a cubic GRR of $\PSU_3(q)$.
\end{proof}

\medskip

\noindent \textbf{Acknowledgements}~~This work was partially supported by NNSFC (12061092), Yunnan Applied Basic Research Projects (202101AT070137) and Melbourne Research Scholarship.

\end{document}